\documentclass[11pt]{amsart}

\usepackage[utf8]{inputenc}
\usepackage[T1]{fontenc}

\usepackage{amsmath,amssymb,amsfonts,textcomp,amsthm,xifthen,graphicx,color,pgfplots}
\usepackage{ifthen}

\usepackage{enumerate}
\usepackage{enumitem}

\usepackage{fullpage}

\usepackage[utf8]{inputenc}

\usepackage[pdftex,
            pdfauthor={Thomas F\"uhrer and Norbert Heuer},
            pdftitle={Preconditioners for pseudodifferential operators of order minus two},
            ]{hyperref}

\usepackage{pgfplots}
\usepgfplotslibrary{external}
\tikzexternalize

\usepackage[capitalize,nameinlink]{cleveref}[0.19]
\crefname{section}{Section}{Sections}
\crefname{subsection}{Subsection}{Subsections}
\Crefname{section}{Section}{Sections}
\Crefname{subsection}{Subsection}{Subsections}
\Crefname{figure}{Figure}{Figures}
\crefformat{equation}{\textup{#2(#1)#3}}
\crefrangeformat{equation}{\textup{#3(#1)#4--#5(#2)#6}}
\crefmultiformat{equation}{\textup{#2(#1)#3}}{ and \textup{#2(#1)#3}}
{, \textup{#2(#1)#3}}{, and \textup{#2(#1)#3}}
\crefrangemultiformat{equation}{\textup{#3(#1)#4--#5(#2)#6}}%
{ and \textup{#3(#1)#4--#5(#2)#6}}{, \textup{#3(#1)#4--#5(#2)#6}}{, and \textup{#3(#1)#4--#5(#2)#6}}
\Crefformat{equation}{#2Equation~\textup{(#1)}#3}
\Crefrangeformat{equation}{Equations~\textup{#3(#1)#4--#5(#2)#6}}
\Crefmultiformat{equation}{Equations~\textup{#2(#1)#3}}{ and \textup{#2(#1)#3}}
{, \textup{#2(#1)#3}}{, and \textup{#2(#1)#3}}
\Crefrangemultiformat{equation}{Equations~\textup{#3(#1)#4--#5(#2)#6}}%
{ and \textup{#3(#1)#4--#5(#2)#6}}{, \textup{#3(#1)#4--#5(#2)#6}}{, and \textup{#3(#1)#4--#5(#2)#6}}
\crefdefaultlabelformat{#2\textup{#1}#3}


%

\newcommand{\cutoff}{\ensuremath{\eta}}

\newcommand{\AS}{\ensuremath{\mathcal{S}}}

\newcommand{\ASmat}{\ensuremath{\mathbf{S}}}
\newcommand{\PRECmat}{\ensuremath{\mathbf{B}}}
\newcommand{\GALmat}{\ensuremath{\mathbf{A}}}
\newcommand{\Imat}{\ensuremath{\mathbf{I}}}

\newcommand{\onevec}{\ensuremath{\mathbf{1}}}

\newcommand{\patch}{\ensuremath{\omega}}
\newcommand{\supp}{\ensuremath{\operatorname{supp}}}

\newcommand{\II}{\ensuremath{\mathcal{I}}}
\newcommand{\JJ}{\ensuremath{\mathcal{J}}}
\newcommand{\HHH}{\ensuremath{\mathcal{H}}}

\newtheorem{theorem}{Theorem}
\newtheorem{lemma}[theorem]{Lemma}

\newtheorem{proposition}[theorem]{Proposition}

\newcommand{\XX}{\mathcal{X}}
\newcommand{\YY}{\mathcal{Y}}
\DeclareMathOperator{\linhull}{span}

\def\enorm#1{|\hspace*{-.5mm}|\hspace*{-.5mm}|#1|\hspace*{-.5mm}|\hspace*{-.5mm}|}


\newcommand{\ip}[2]{(#1\hspace*{.5mm},#2)}

\newcommand{\norm}[3][]{#1\|#2#1\|_{#3}}

\newcommand{\snorm}[2]{|#1|_{#2}}

\newcommand{\diam}{\mathrm{diam}}
\def\div{{\rm div\,}}

\newcommand{\set}[2]{\big\{#1\,:\,#2\big\}}

\newcommand{\pp}{\boldsymbol{p}}

\newcommand{\RT}{\ensuremath{\mathcal{RT}}}

\newcommand{\R}{\ensuremath{\mathbb{R}}}
\newcommand{\N}{\ensuremath{\mathbb{N}}}

\newcommand{\TT}{\ensuremath{\mathcal{T}}}
\newcommand{\MM}{\ensuremath{\mathcal{M}}}

\newcommand{\PP}{\ensuremath{\mathcal{P}}}

\newcommand{\EE}{\ensuremath{\mathcal{E}}}

\newcommand{\zz}{\ensuremath{{\boldsymbol{z}}}}
\newcommand{\normal}{\ensuremath{{\boldsymbol{n}}}}

\newcommand{\Amat}{\ensuremath{\boldsymbol{A}}}


\newcommand{\ppsi}{{\boldsymbol\psi}}

\newcommand{\ssigma}{{\boldsymbol\sigma}}


\newcommand{\Dmat}{\ensuremath{\mathbf{D}}}

\newcommand{\Mmat}{\mathbf{M}}
\newcommand{\Lmat}{\mathbf{L}}
\newcommand{\Rmat}{\mathbf{R}}

\newcommand{\xx}{{\boldsymbol{x}}}

\begin{document}

\title{Optimal quasi-diagonal preconditioners for pseudodifferential operators of order minus two}
\date{\today}
\author{Thomas F\"{u}hrer}
\address{Facultad de Matem\'{a}ticas, Pontificia Universidad Cat\'{o}lica de Chile, Santiago, Chile}
\email{tofuhrer@mat.uc.cl \emph{(corresponding author)}, nheuer@mat.uc.cl}

\author{Norbert Heuer}
%

%
\thanks{{\bf Acknowledgment.} 
This work was supported by CONICYT through FONDECYT projects 11170050 and 1150056}

\keywords{Pseudodifferential operator of negative order, diagonal scaling,
          additive Schwarz method, preconditioner, negative order Sobolev spaces}
\subjclass[2010]{65F35, 
                 65N30 
                 } 
\begin{abstract}
  We present quasi-diagonal preconditioners for piecewise polynomial discretizations of
  pseudodifferential operators of order minus two in any space dimension.
  Here, quasi-diagonal means diagonal up to a sparse transformation.
  Considering shape regular simplicial meshes and arbitrary fixed polynomial degrees,
  we prove, for dimensions larger than one, that our preconditioners are asymptotically optimal.

  Numerical experiments in two, three and four dimensions confirm our results.
  For each dimension, we report on condition numbers for piecewise constant
  and piecewise linear polynomials.
\end{abstract}
\maketitle

\section{Introduction}
Let $A:\;H^{-1}(\Omega)\to H_0^1(\Omega)$ be a linear continuous and coercive operator, and
$f\in H_0^1(\Omega)$. Here, $\Omega\subset\R^n$ ($n\geq 2$) is a bounded connected polyhedral domain
with Lipschitz boundary, $H_0^1(\Omega)$ is the standard Sobolev space of $H^1(\Omega)$
functions with zero trace, and $H^{-1}(\Omega):=(H_0^1(\Omega))^*$ is its dual.
Considering the problem of finding $\phi\in H^{-1}(\Omega)$ such that
\begin{align} \label{prob}
  A\phi = f,
\end{align}
the finite element method (FEM) is a standard approach to approximate $\phi$. It consists in solving
the variational form of \cref{prob} in piecewise polynomial subspaces of $H^{-1}(\Omega)$.
The pseudodifferential operator $A$ is of order minus two and resulting linear systems are usually
ill conditioned.
For instance, using quasi-uniform meshes with elements of diameter $h$ and bounded polynomial
degrees, the FEM generates system matrices with spectral condition number growing like $O(h^{-2})$,
except specific basis functions are used, cf. Hsiao and Wendland
\cite[Corollary~2.1]{HsiaoW_81_ANL}, \cite[Remark~4]{HsiaoW_77_FEM}.
For a detailed analysis in the case of boundary integral operators, in particular considering locally
refined meshes, we refer to Ainsworth et al. \cite{amt99}.
Considering small mesh sizes $h$, there is an obvious need to use preconditioned
iterative solvers. In this paper, we show that very simple preconditioners yield uniformly
bounded condition numbers, for shape regular simplicial meshes in any space dimension
and for arbitrary, but fixed, polynomial degrees. Our preconditioners are diagonal up to
sparse transformations; we call them \emph{quasi-diagonal}.
We provide definitions and proofs for dimension $n\ge 2$. The one-dimensional case is ignored,
but can be deduced by straightforward simplifications.

Our interest in preconditioners for discretized operators of order minus two arose from our recent and
ongoing research. In~\cite{DPGplate} we proposed an ultraweak formulation of the
Kirchhoff--Love plate bending model where we consider two variables, the vertical deflection of the plate
and the bending moments. Both unknowns are taken in $L^2(\Omega)$, the standard Lebesgue space of
square integrable functions. In applications, also the shear force is a relevant quantity.
Being the divergence of the bending moments, it is generally not $L^2$-regular. Its natural
space is $H^{-1}(\Omega)^n$. Aiming at an approximation of the shear force, an efficient implementation
will require to study preconditioners in $H^{-1}(\Omega)$.

A second motivation is the approximation of obstacle problems by least-squares finite elements.
In~\cite{LSQobstacle}, a first-order reformulation with Lagrangian multiplier $\lambda$ was analyzed. 
The functional to be minimized there, includes a residual term measured in the $L^2(\Omega)$ norm.
However, measuring the residual in the weaker (discrete) $H^{-1}(\Omega)$ norm (as in~\cite{BLP97,BP96}) ensures 
optimal convergence orders for less regular solutions. This would lead to a different functional
\begin{align*}
   J_{-1}(u,\ssigma,\lambda) \simeq \norm{\nabla u}{}^2 + \norm{\ssigma}{}^2 + \norm{\lambda}{-1}^2,
   \quad (u,\ssigma,\lambda)\in H_0^1(\Omega)\times L^2(\Omega)^n\times H^{-1}(\Omega).
\end{align*}
An efficient implementation of a least-squares scheme for such a functional requires optimal preconditioners
for the $H^{-1}(\Omega)$ variable.

We note that the construction of preconditioners in Sobolev spaces of non-integer negative order is more
complicated than in $H^{-1}(\Omega)$. A standard case are weakly singular boundary integral operators
that are of order minus one. They are well posed as linear operators acting on $H^{-1/2}(\Gamma)$,
the dual space of the trace of $H^1(\Omega)$ when $\Gamma$ is the boundary of a sufficiently smooth
domain $\Omega$. Preconditioners consider a so-called coarse grid space and are of
two-level (or additive Schwarz) or multilevel type, see
\cite{TranS_96_ASM,HeuerST_98_MAS} for two-dimensional problems, and
\cite{Oswald_98_MNH,MundSW_98_TLM,Heuer_01_ApS,abemSolve} for problems in higher space dimensions.
Multigrid methods for two-dimensional problems have been analyzed in \cite{vonPetersdorffS_92_MSP},
and of algebraic construction in \cite{LangerPR_03_EPB}.
One has to note that in two-dimensions, where boundary integral operators live on curves,
the construction of preconditioners for weakly singular operators is equivalent to the one for
hypersingular operators, which are of opposite order, one.
Preconditioners constructed by using operators of opposite order have been proposed in
\cite{SteinbachW_98_CSE}. More recently, Stevenson and van Veneti\"e presented an abstract
theory for general negative orders and space dimensions based on the operator preconditioning framework, see~\cite{StevensonVanVenetiePrecond}.

In contrast to the aforementioned works, we consider simple (one-level) decompositions
of piecewise polynomials spaces of arbitrary (but fixed) order.
A key point is the decomposition of piecewise constants into the divergence of Raviart--Thomas basis functions.
We show that this decomposition is stable (in the sense of the additive Schwarz framework) in $H^{-1}(\Omega)$. 
Since our decomposition includes only one-dimensional spaces and since the support of the Raviart--Thomas basis
functions is local, it follows directly from the additive Schwarz theory that the corresponding preconditioner is
quasi-diagonal in the following sense:
For piecewise constants our proposed preconditioner has the form
\begin{align*}
  \PRECmat^{-1} = \Imat \Dmat \Imat^t,
\end{align*}
where $\Dmat$ is a diagonal matrix and $\Imat$ is a sparse matrix.
Let $\Amat$ denote the Galerkin matrix of the operator $A$ discretized with piecewise constants. From our analysis it
follows that the condition number is uniformly bounded, i.e.,
\begin{align*}
  \kappa(\PRECmat^{-1}\Amat) \leq C_\mathrm{cond} < \infty,
\end{align*}
where the constant depends in general on $\Omega$, the dimension $n\in\N$, the polynomial degree $p\in\N_0$ and
the shape regularity of the underlying mesh.
We note that an optimal (local) multilevel diagonal preconditioner for the weakly singular integral operator
has been proposed in~\cite{abemSolve}.
It is based on the decomposition of piecewise constants into the surface divergence of Raviart--Thomas functions.

The remainder of this paper is as follows. Our discrete settings with definition of spaces,
subspace decompositions as well as the main results are formulated in the next section.
Specifically, Sobolev spaces and norms are recalled in \cref{sec:main:norm},
discrete spaces are defined in \cref{sec:main:discrete} along with collecting some norm relations,
and basic additive Schwarz settings are given in \cref{sec:abstract}. Subsequently, our
four principal results on bounded condition numbers are formulated in four steps,
namely for spaces of piecewise polynomials of degrees $p=0$ in $H^{-1}(\Omega)$ and $\widetilde H^{-1}(\Omega)$
(the dual of $H^1(\Omega)$), and for higher degrees in $H^{-1}(\Omega)$ and $\widetilde H^{-1}(\Omega)$,
respectively, in \cref{sec:decomp:Hm1,sec:decomp:tildeHm1,sec:decomp:Hm1:Pp,sec:decomp:tildeHm1:Pp}.
Proofs of the main results are given in \cref{sec:proof:Hm1,sec:proof:tildeHm1,sec:proof:Pp}.
For our numerical results we need explicit matrix representations of the preconditioners.
They are given in \cref{sec:matrix}. Finally, in \cref{sec:num}, we report on numerical experiments
in two, three and four space dimensions for piecewise constant and piecewise linear polynomials.
We study uniform refined meshes for all cases and locally refined meshes for the two-dimensional case.
In the appendix we give a proof of a technical result needed in the analysis.

Throughout, the notation $a\lesssim b$ means that there exists a constant $c>0$
such that $a\le cb$. The constant is independent of the underlying mesh under the assumption
of (uniform) shape regularity, but may depend on the polynomial degrees, the space dimension
and the domain $\Omega$.
The relation $a\simeq b$ means that $a\lesssim b$ and $b\lesssim a$.

\section{Main results}\label{sec:main}

\subsection{Sobolev spaces \& (semi-)norms} \label{sec:main:norm}
The boundary of the Lipschitz polyhedron $\Omega\subset \R^n$ ($n\geq 2$) is denoted by $\Gamma$,
and $\normal$ is the unit normal vector on $\Gamma$ pointing outside of $\Omega$.
For a non-empty open and connected subset $\omega\subseteq\Omega$, we denote the $L^2(\omega)$
scalar product and norm by $\ip\cdot\cdot_\omega$ and $\norm{\cdot}\omega$, respectively.
If $\omega=\Omega$ we skip the index, i.e., $\ip\cdot\cdot =
\ip\cdot\cdot_\Omega$ and $\norm\cdot{} =\norm{\cdot}\Omega$.
Furthermore, we also use the space $L_*^2(\omega) := \set{v\in L^2(\Omega)}{\ip{v}1_\omega=0}$.
For $m\in\N$, $H^m(\omega)$ denotes the standard Sobolev space of $m$ times weakly differentiable
functions, with norm $\norm{\cdot}{m,\omega}$.
Again, if $\omega=\Omega$, then $\norm{\cdot}{m} = \norm{\cdot}{m,\Omega}$.
We consider also fractional-order Sobolev spaces $H^{m+s}(\omega)$ ($m\in\N_0,s\in(0,1)$)
with (squared) Sobolev--Slobodetskij seminorm 
\begin{align*}
  \snorm{v}{m+s,\omega}^2 := \sum_{|\boldsymbol{\alpha}|=m} \int_\omega\int_\omega
  \frac{|D^{\boldsymbol{\alpha}}v(x) - D^{\boldsymbol\alpha}v(y)|}{|x-y|^{n+2s}} \,dy\,dx
\end{align*}
and norm $\norm{\cdot}{m+s,\omega}^2 := \norm{\cdot}{m,\omega}^2 + \snorm{\cdot}{m+s,\omega}^2$.
For $m=0$ we identify $H^0(\omega) = L^2(\omega)$, and if $\omega=\Omega$, we skip the index.
For $s>0$, the dual spaces are $\widetilde H^{-s}(\omega) = (H^s(\omega))^*$ and
the duality pairing is given by the extended $L^2(\omega)$ scalar product.
The dual spaces are equipped with the norms
\begin{align*}
  \norm{\phi}{-s,\sim,\omega} := \sup_{0\neq v\in H^s(\Omega)} \frac{\ip{\phi}{v}}{\norm{v}{s,\omega}} \quad(s>0).
\end{align*}
For $s>0$, the spaces $H_0^s(\omega)$ are defined as the completion of $C_0^\infty(\Omega)$
with respect to the norm $\norm{\cdot}{s,\omega}$.
The dual spaces are denoted by $H^{-s}(\omega) := (H_0^s(\omega))^*$ with dual norms $\norm{\cdot}{-s,\omega}$.
In the special case $\omega=\Omega$ and $s=1$ we use the dual norm
\begin{align*}
  \norm{\phi}{-1} := \sup_{0\neq v\in H_0^1(\Omega)} \frac{\ip{\phi}v}{\norm{\nabla v}{}}.
\end{align*}
The scalar products in the dual spaces, inducing the norms $\norm{\cdot}{-1,\sim}$ and $\norm{\cdot}{-1}$,
are denoted by $\ip{\cdot}\cdot_{-1,\sim}$ and $\ip{\cdot}\cdot_{-1}$, respectively.

\subsection{Mesh \& discrete spaces} \label{sec:main:discrete}
Let $\TT$ denote a regular mesh of open $n$-simplices that cover $\Omega$, i.e.,
\begin{align*}
  \overline\Omega = \bigcup_{T\in\TT} \overline T.
\end{align*}
An open $n$-simplex $T\in\TT$ is the interior of the convex hull of $n+1$ different vertices
$\zz_{T,j}\in\R^n$ ($j=1,\dots,n+1$) that do not lie on the same hypersurface.
We say that $\TT$ is shape regular if there exists a positive constant $\gamma$ such that
\begin{align*}
  \max_{T\in\TT} \frac{\diam(T)^n}{|T|} \leq \gamma < \infty.
\end{align*}
Here, $|T|$ denotes the measure (volume) of $T\in\TT$. 
By $\EE(T)$ we denote the set of all $n+1$ boundary simplices of $T$
(relatively open simplices made up of $n$ vertices of $T$). The elements $E\in\EE(T)$ are called facets.
The collection of all facets is denoted by $\EE = \EE(\TT) = \bigcup_{T\in\TT}\bigcup_{E\in\EE(T)} \{E\}$.
With $\EE^\Gamma$ we denote all facets in $\EE$ that are subsets of $\Gamma$, while
$\EE^\Omega := \EE\setminus \EE^\Gamma$ is the set of interior facets. For $T\in\TT$ we define the patch
\begin{align*}
  \patch(T) := \set{T'\in\TT}{\overline T' \cap \overline T \neq \emptyset}.
\end{align*}
The mesh-width function $h = h_\TT$ and the local mesh-width $h_T$ are given by
\begin{align*}
  h|_T := h_T := \diam(T) \quad\text{for all } T\in\TT.
\end{align*}
We note that $h_{T'}\simeq h_T$ for all $T'\in \patch(T)$, 
and $h_E := \diam(E) \simeq h_T$ for all $E\in \EE(T)$, where the equivalence constants only depend on
the shape regularity of $\TT$.
For consistency we identify the patch $\patch(T)$ with the domain 
$\mathrm{int}(\bigcup_{T'\in\patch(T)} \overline T')$ where needed, e.g., to refer to inner products on
patches as in $\ip\cdot\cdot_{\patch(T)}$.

Let $\PP^p(\TT)$ denote the space of $\TT$-elementwise polynomials of degree less than or equal to $p\in\N_0$.
Correspondingly, $\RT^0(\TT)$ is the lowest-order Raviart--Thomas space (basis functions are defined below).
The space $\PP^0(\TT)$ is equipped with the standard basis of characteristic functions
$\set{\chi_T}{T\in\TT}$, where
\begin{align*}
  \chi_T|_{T'} = \begin{cases}
    1 & \text{if } T'=T, \\
    0 & \text{else}.
  \end{cases}
\end{align*}
For $p\geq 1$, we denote $d(n,p):=\dim(\PP^p(T))$ (which is independent of $T\in\TT$), and let
\begin{align*}
  \set{\chi_T,\chi_{T,1},\chi_{T,2},\dots,\chi_{T,d(n,p)-1}}{T\in\TT}
\end{align*}
be a basis of $\PP^p(\TT)$ with $\supp\{\chi_{T,j}\}=\overline T$ and normalized by $\ip{\chi_{T,j}}1 = 0$
($T\in\TT$, $j\in \JJ := \{1,\cdots,d(n,p)-1\}$). Recall that
\begin{align*}
  d(n,p) = {p+n\choose p} = \frac{\prod_{j=1}^n (p+j)}{n!}.
\end{align*}
Throughout our analysis, we will make use of the inverse inequalities
\begin{align}\label{eq:invineq}
  \norm{h\phi}{} \lesssim \norm{\phi}{-1} \quad\text{and}\quad
  \norm{h\phi}{} \lesssim \norm{\phi}{-1,\sim} \quad\text{for all }
  \phi \in \PP^p(\TT),
\end{align}
see, e.g.,~\cite[Theorem~3.6]{ghs05} for a general case with $n=3$.
We stress the fact that these relations follow by simple scaling arguments.
The involved constants only depend on the shape regularity of $\TT$, $n\in\N$, and $p\in\N_0$.

For each $E\in \EE^\Omega$ there exist exactly two elements $T^\pm$ with $\overline T^+\cap \overline T^- = \overline
E$. 
Furthermore, let $\pp_E^\pm\in \overline T^\pm$ denote the vertex of $T^\pm$ opposite to $E$ and let $\normal_E$ denote the
unit normal on $E$ pointing from $T^+$ to $T^-$.
If $E\in \EE^\Gamma$ then there is only one element $T\in\TT$ such that $E$ is a facet of $T$. 
In such a case we write $T^+= T$, $T^-=\emptyset$ and note that $\normal_E$ points from $\Omega$ to the exterior,
i.e., $\normal_E$ coincides with the normal vector $\normal$ on $\Gamma$.
Let $|E|$ denote the (relative) measure of $E\in\EE$.
For $n\geq 2$ we define the Raviart--Thomas basis function
\begin{align*}
  \ppsi_E := \begin{cases}
    \pm \frac{|E|}{n|T^\pm|} \left(\xx - \pp_E^\pm\right) & \text{if } \xx \in T^+\cup T^-, \\
    0 & \text{else},
  \end{cases}
\end{align*}
and note that
\begin{align*}
  \div\ppsi_E = \begin{cases}
    \pm \frac{|E|}{|T^\pm|} & \text{if } \xx\in T^+\cup T^-, \\
    0 & \text{else}.
  \end{cases}
\end{align*}
The following scaling result on the Raviart--Thomas functions will be used several times.
\begin{lemma}\label{lem:scaling}
  It holds that
  \begin{alignat*}{2}
    \norm{\div\ppsi_E}{-1} &\leq \norm{\ppsi_E}{} \simeq h_E \norm{\div\ppsi_E}{} 
    \lesssim \norm{\div\ppsi_E}{-1} &\quad&\text{for all } E\in\EE, \\
    \norm{\div\ppsi_E}{-1,\sim} &\leq \norm{\ppsi_E}{} \simeq h_E \norm{\div\ppsi_E}{} 
    \lesssim \norm{\div\ppsi_E}{-1,\sim} &\quad&\text{for all } E\in\EE^\Omega.
  \end{alignat*}
  The involved constants only depend on the shape regularity of $\TT$ and $n$.
\end{lemma}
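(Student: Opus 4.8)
The plan is to prove each of the two displayed chains link by link, with the same three ingredients in both cases: integration by parts for the leftmost inequality, an explicit computation with the formula for $\ppsi_E$ for the middle equivalence, and the inverse inequality \cref{eq:invineq} for the rightmost estimate. The only place the two lines differ is in the handling of boundary terms in the integration by parts, and that is exactly what forces the restriction to interior facets $E\in\EE^\Omega$ in the second line; I expect this to be the one point requiring care, the rest being routine scaling.

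\emph{Leftmost inequality.} First I would record that $\ppsi_E\in\Hdivset\Omega$ with the stated (piecewise constant) divergence, and that its normal component vanishes on every facet other than $E$ (each such facet of $T^\pm$ contains the opposite vertex $\pp_E^\pm$, so $\xx-\pp_E^\pm$ is tangential there); in particular, when $E\in\EE^\Omega$ the normal trace of $\ppsi_E$ on $\Gamma$ is zero. Integration by parts then gives $\ip{\div\ppsi_E}{v}=-\ip{\ppsi_E}{\nabla v}$ for all $v\in H_0^1(\Omega)$ and arbitrary $E\in\EE$ (the boundary term drops since $v$ has zero trace), and also for all $v\in H^1(\Omega)$ when $E\in\EE^\Omega$. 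Cauchy--Schwarz, $\norm{\nabla v}{}\le\norm{v}{1}$, and the definitions of $\norm\cdot{-1}$ and $\norm\cdot{-1,\sim}$ then yield $\norm{\div\ppsi_E}{-1}\le\norm{\ppsi_E}{}$ for all $E\in\EE$ and $\norm{\div\ppsi_E}{-1,\sim}\le\norm{\ppsi_E}{}$ for $E\in\EE^\Omega$.

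\emph{Middle equivalence.} On $T^\pm$ one has pointwise $|\ppsi_E(\xx)|=\frac{|E|}{n|T^\pm|}\,|\xx-\pp_E^\pm|$ with $|\xx-\pp_E^\pm|\le h_{T^\pm}$, while a standard scaling argument using shape regularity gives $\int_{T^\pm}|\xx-\pp_E^\pm|^2\,d\xx\simeq|T^\pm|\,h_{T^\pm}^2$; since $\div\ppsi_E=\pm|E|/|T^\pm|$ is the constant appearing in these expressions, this shows $\norm{\ppsi_E}{T^\pm}\simeq h_{T^\pm}\norm{\div\ppsi_E}{T^\pm}$. Summing the at most two contributions and using $h_{T^\pm}\simeq h_E$ gives $\norm{\ppsi_E}{}\simeq h_E\norm{\div\ppsi_E}{}$.

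\emph{Rightmost inequality.} Since $\div\ppsi_E$ is $\TT$-piecewise constant it lies in $\PP^p(\TT)$, so \cref{eq:invineq} applies and gives $\norm{h\,\div\ppsi_E}{}\lesssim\norm{\div\ppsi_E}{-1}$, respectively $\norm{h\,\div\ppsi_E}{}\lesssim\norm{\div\ppsi_E}{-1,\sim}$. Because $h\simeq h_E$ on $\supp(\div\ppsi_E)=T^+\cup T^-$, the left-hand side equals $h_E\norm{\div\ppsi_E}{}$ up to shape-regularity constants, which closes both chains and completes the proof.
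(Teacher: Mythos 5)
Your proof is correct and follows essentially the same three-step approach as the paper: integration by parts for the leftmost inequalities (with the boundary term vanishing either because $v\in H_0^1(\Omega)$ or because $\ppsi_E\cdot\normal|_\Gamma=0$ for interior facets), a scaling argument for the middle equivalence, and the inverse inequality \cref{eq:invineq} for the rightmost bound. The extra detail you supply (the tangentiality of $\xx-\pp_E^\pm$ on the other facets, and the explicit computation behind the scaling) is exactly what the paper's ``simple scaling argument'' remark leaves to the reader.
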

\begin{proof}
  By definition of the dual norm and integration by parts we see that
  \begin{align*}
    \norm{\div\ppsi_E}{-1} = \sup_{0\neq v\in H_0^1(\Omega)} \frac{\ip{\div\ppsi_E}{v}}{\norm{\nabla v}{}}
    = \sup_{0\neq v\in H_0^1(\Omega)} \frac{-\ip{\ppsi_E}{\nabla v}}{\norm{\nabla v}{}}
    \leq \norm{\ppsi_E}{}
  \end{align*}
  for all $E\in \EE$. If $E\in \EE^\Omega$ the same argument shows that
  \begin{align*}
    \norm{\div\ppsi_E}{-1,\sim} = \sup_{0\neq v\in H^1(\Omega)} \frac{\ip{\div\ppsi_E}{v}}{\norm{v}{1}}
    = \sup_{0\neq v\in H^1(\Omega)} \frac{-\ip{\ppsi_E}{\nabla v}}{\sqrt{\norm{\nabla v}{}^2+\norm{v}{}^2}}
    \leq \norm{\ppsi_E}{}.
  \end{align*}
  Furthermore, the equivalence $\norm{\ppsi_E}{} \simeq h_E \norm{\div\ppsi_E}{}$ follows by a simple scaling argument.
  Finally, note that $h_E\norm{\div\ppsi_E}{} \simeq \norm{h\div\ppsi_E}{}$. Together with the inverse
  inequality~\cref{eq:invineq} we conclude the proof.
\end{proof}

The following equivalence result in $L^2$ follows from the equivalence of norms in finite-dimensional
spaces and linear independence.
\begin{proposition}
  Let $T\in\TT$.
  If $\ppsi = \sum_{E\in\EE(T)} \alpha_E\ppsi_E$, then
  \begin{align*}
    \norm{\ppsi}{T}^2 \simeq \sum_{E\in\EE(T)} \norm{\alpha_E\ppsi_E}{T}^2,
  \end{align*}
  where the involved constants only depend on the shape regularity of $\TT$ and $n$.
  \qed
\end{proposition}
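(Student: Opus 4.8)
The plan is to reduce the asserted equivalence to a fixed reference simplex by an affine change of variables, arranging matters so that the only mesh-dependent quantity entering the constants is the condition number of the affine pull-back, which shape regularity keeps bounded.

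First I would fix $T\in\TT$ and record that, on $T$, each Raviart--Thomas function has the form $\ppsi_E|_T = c_E(\xx-\pp_E)$, where $\pp_E\in\overline T$ is the vertex of $T$ opposite the facet $E\in\EE(T)$ and $|c_E| = |E|/(n|T|)$ (the sign is irrelevant since only norms occur, and every facet of $T$ has $T$ as an adjacent element, so all $n+1$ of these functions are nonzero on $T$). Let $F_T(\hat\xx) = B_T\hat\xx + b_T$ be the affine bijection from a fixed reference $n$-simplex $\hat T$ onto $T$ that carries the reference vertices $\hat\pp_E$ to the vertices $\pp_E$, so that $\xx-\pp_E = B_T(\hat\xx-\hat\pp_E)$. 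Substituting $\xx = F_T(\hat\xx)$ produces the common factor $|\det B_T|$ on both sides of the claimed equivalence; after cancelling it and writing $\beta_E := \alpha_E c_E$, the statement becomes
\[ \int_{\hat T}\Bigl|B_T\sum_{E\in\EE(T)}\beta_E(\hat\xx-\hat\pp_E)\Bigr|^2\,d\hat\xx \;\simeq\; \sum_{E\in\EE(T)}\beta_E^2\int_{\hat T}\bigl|B_T(\hat\xx-\hat\pp_E)\bigr|^2\,d\hat\xx . \]

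Next I would invoke shape regularity. The hypothesis $\diam(T)^n/|T|\le\gamma$ is the usual simplex shape-regularity condition, equivalent to $\diam(T)\simeq\rho_T$ with $\rho_T$ the inradius, and hence (via the standard estimates $\|B_T\|\lesssim\diam(T)$ and $\|B_T^{-1}\|\lesssim\rho_T^{-1}$ relative to the fixed $\hat T$) yields $\cond(B_T) = \|B_T\|\,\|B_T^{-1}\| \lesssim 1$ with a constant depending only on $\gamma$, $n$ and $\hat T$. Therefore $\int_{\hat T}|B_T\hat\vv|^2\,d\hat\xx \simeq \int_{\hat T}|\hat\vv|^2\,d\hat\xx$ uniformly in $T$ for every $\hat\vv\in L^2(\hat T)^n$. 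Applying this with $\hat\vv = \sum_E\beta_E(\hat\xx-\hat\pp_E)$ on the left and with each $\hat\vv = \beta_E(\hat\xx-\hat\pp_E)$ on the right, it remains to show
\[ \int_{\hat T}\Bigl|\sum_{E\in\EE(T)}\beta_E(\hat\xx-\hat\pp_E)\Bigr|^2\,d\hat\xx \;\simeq\; \sum_{E\in\EE(T)}\beta_E^2\int_{\hat T}\bigl|\hat\xx-\hat\pp_E\bigr|^2\,d\hat\xx \]
with constants depending only on $n$.

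This is an equivalence of two norms on the finite-dimensional space $\linhull\{\,\hat\xx-\hat\pp_E : E\in\EE(\hat T)\,\}$ of affine $\R^n$-valued maps, so it holds as soon as the $n+1$ functions $\hat\xx-\hat\pp_E$ are linearly independent; and indeed $\sum_E\beta_E(\hat\xx-\hat\pp_E) = \bigl(\sum_E\beta_E\bigr)\hat\xx - \sum_E\beta_E\hat\pp_E \equiv 0$ forces $\sum_E\beta_E = 0$ and $\sum_E\beta_E\hat\pp_E = 0$, whence $\beta_E = 0$ for all $E$ because the reference vertices $\hat\pp_E$ are affinely independent. I expect the only delicate point to be organisational rather than conceptual: setting up the affine pull-back so that the determinant factors and the per-facet constants $c_E$ cancel cleanly on both sides, and checking that $\cond(B_T)\lesssim 1$ is the sole mesh-dependent input, so that all equivalence constants ultimately depend only on $n$ and the shape-regularity constant $\gamma$.
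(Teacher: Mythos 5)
Your argument is correct and follows exactly the approach the paper alludes to (``equivalence of norms in finite-dimensional spaces and linear independence''), supplying the standard reference-element reduction with shape regularity controlling $\cond(B_T)$. One small imprecision: the claim that $\int_{\hat T}|B_T\hat\vv|^2\,d\hat\xx \simeq \int_{\hat T}|\hat\vv|^2\,d\hat\xx$ uniformly in $T$ is not literally true, as the ratio carries a $\diam(T)^2$-type scaling; but since that scaling enters both sides of the target equivalence symmetrically, only $\cond(B_T)\lesssim 1$ matters, and the way you actually apply the estimate is sound.
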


We recall that for $s>1/2$ the Raviart--Thomas projector $\Pi^\div : H^s(\Omega)^n \to \RT^0(\TT)$
is given by
\begin{align*}
  \Pi^\div\ssigma := \sum_{E\in\EE} \alpha_E \ppsi_E \quad\text{with}\quad \alpha_E := \frac{1}{|E|} 
  \int_E \ssigma\cdot\normal_E \,ds_E.
\end{align*}
It holds the following local bound on the $L^2$ norm,
\begin{align}\label{eq:RT:localbound}
  \norm{\Pi^\div\ssigma}{T} \lesssim \norm{\ssigma}{T} + h_T^s\snorm{\ssigma}{s,T}.
\end{align}
Here, the involved constants only depend on the shape regularity of $\TT$, $n$ and $s\in(1/2,1]$.
Also recall the commutativity property $\Pi^0\div\ssigma = \div\Pi^\div\ssigma$ for sufficiently smooth $\ssigma$,
where $\Pi^0 : L^2(\Omega) \to \PP^0(\TT)$ is the $L^2(\Omega)$ projection.
These results can be found, e.g., in \cite[Chapter~2]{BoffiBrezziFortin} and easily extend to arbitrary space
dimensions.

\subsection{Additive Schwarz framework}\label{sec:abstract}
Let $\XX$ denote a finite-dimensional subspace of a Hilbert space $\HHH$ with norm $\norm\cdot{\HHH}$.
For an index set $\II$ let $\XX_i\subseteq \XX$ ($i\in\II$) denote subspaces of $\XX$ such that
we have the splitting
\begin{align}\label{eq:splitting:abstract}
  \XX = \sum_{i\in\II} \XX_i.
\end{align}
To this decomposition we associate the \emph{additive Schwarz norm} $\enorm\cdot_{\XX}$ given by
\begin{align}\label{eq:ASnorm:abstract}
  \enorm{x}_{\XX}^2 := \inf\set{\sum_{i\in\II} \norm{x_i}{\HHH}^2}{x_i\in \XX_i \text{ such that } x = \sum_{i\in\II} x_i}.
\end{align}
Central to the additive Schwarz framework is to establish a norm equivalence of the form
\begin{align}\label{eq:equiv:abstract}
  C^{-1} \enorm{x}_{\XX} \leq \norm{x}{\HHH} \leq C \enorm{x}_{\XX} \quad\text{for all }x\in \XX.
\end{align}
Having such an equivalence implies that one can define related additive Schwarz 
preconditioners for elliptic problems in $\HHH$.
The condition numbers of the resulting preconditioned system matrices only depend on $C>0$. 
This is well-known knowledge and we refer the interested reader to~\cite{oswald94,ToselliWidlund}.

\subsection{Subspace decomposition of $\PP^0(\TT)$ in $H^{-1}(\Omega)$}\label{sec:decomp:Hm1}
Set $\XX_E := \linhull\{\div\ppsi_E\}\subseteq \PP^0(\TT) =: \XX$.
We consider the splitting
\begin{align}\label{eq:splitting:Hm1}
  \XX = \sum_{E\in\EE} \XX_E
\end{align}
and the associated norm~\cref{eq:ASnorm:abstract}.
Noting that $\div(\RT^0(\TT)) = \PP^0(\TT)$ this shows that the sum on the right-hand side is indeed a decomposition of
$\PP^0(\TT)$.

Our first main result reads as follows.
\begin{theorem}\label{thm:Hm1}
  There exists $C>0$ which only depends on $\Omega$, $n$, and the shape regularity of $\TT$ such that
  \begin{align}\label{eq:equiv:Hm1}
    C^{-1} \enorm{\phi}_{\XX} \leq \norm{\phi}{-1} \leq C \enorm{\phi}_{\XX} \quad\text{for all } 
    \phi \in \PP^0(\TT).
  \end{align}
\end{theorem}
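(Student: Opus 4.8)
The plan is to establish the two inequalities in \cref{eq:equiv:Hm1} separately. The upper bound $\norm{\phi}{-1}\le C\enorm{\phi}_{\XX}$ is the easy direction: given any decomposition $\phi=\sum_{E\in\EE}\alpha_E\div\ppsi_E$, the triangle inequality together with the first estimate of \cref{lem:scaling} (namely $\norm{\div\ppsi_E}{-1}\le\norm{\ppsi_E}{}\simeq h_E\norm{\div\ppsi_E}{}$) gives $\norm{\phi}{-1}\le\sum_E\norm{\alpha_E\div\ppsi_E}{-1}$. The obstruction is that this is an $\ell^1$-type bound, not the $\ell^2$-type bound appearing in $\enorm{\cdot}_{\XX}$. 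To fix this one uses a finite-overlap (colouring) argument: since the supports of $\ppsi_E$ are contained in element patches and the mesh is shape regular, only boundedly many $\ppsi_E$ overlap at any point, so one can pass from the $\ell^1$ sum to an $\ell^2$ sum at the cost of a constant depending only on $n$ and the shape regularity. Taking the infimum over all decompositions yields the upper bound. (A cleaner variant: estimate $\norm{\phi}{-1}$ directly by testing against $v\in H_0^1(\Omega)$, write $\ip{\phi}{v}=\sum_E\alpha_E\ip{\div\ppsi_E}{v}=-\sum_E\alpha_E\ip{\ppsi_E}{\nabla v}$, apply Cauchy--Schwarz edge-wise and then the overlap bound to control $\sum_E\norm{\nabla v}{\supp\ppsi_E}^2\lesssim\norm{\nabla v}{}^2$.)

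The lower bound $\enorm{\phi}_{\XX}\le C\norm{\phi}{-1}$ is the substantive direction and the main obstacle. Here one must exhibit, for a given $\phi\in\PP^0(\TT)$, an explicit decomposition $\phi=\sum_E\alpha_E\div\ppsi_E$ with $\sum_E\norm{\alpha_E\div\ppsi_E}{-1}^2\lesssim\norm{\phi}{-1}^2$. The natural strategy is to lift $\phi$ to a vector field: since $\norm{\phi}{-1}=\sup_{v}\ip{\phi}{v}/\norm{\nabla v}{}$, there is $\ssigma\in\LL^2(\Omega)^n$ (e.g.\ $\ssigma=\nabla u$ where $u\in H_0^1(\Omega)$ solves $-\Delta u=\phi$ weakly, or a Bogovskii-type right inverse) with $\div\ssigma=\phi$ and $\norm{\ssigma}{}\lesssim\norm{\phi}{-1}$. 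One then wants to apply the Raviart--Thomas interpolant and set $\alpha_E$ to be its coefficients, using the commutativity $\div\Pi^\div\ssigma=\Pi^0\div\ssigma=\Pi^0\phi=\phi$ (as $\phi$ is already piecewise constant). The difficulty is that $\Pi^\div$ is only defined on $H^s(\Omega)^n$ with $s>1/2$, whereas the lift $\ssigma$ is merely $L^2$; moreover the local bound \cref{eq:RT:localbound} involves the fractional seminorm $\snorm{\ssigma}{s,T}$, which need not be finite. This regularity gap is the crux of the proof.

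The remedy is a regularization/smoothing step: replace $\ssigma$ by a smoothed field $\ssigma_h$ that is smooth enough for $\Pi^\div$ to act on, retains $\div\ssigma_h$ close to $\phi$ (or equal to $\phi$ after a correction), and satisfies the scale-invariant bound $\norm{\ssigma_h}{T}+h_T^s\snorm{\ssigma_h}{s,T}\lesssim\norm{\ssigma}{\patch(T)}$ on each element patch. A standard Clément/Scott--Zhang-type quasi-interpolation onto a conforming Raviart--Thomas or $\HH(\div)$ finite element space of the appropriate order, combined with the stability and approximation properties of such operators, does this; one must check that the divergence-correction does not spoil the estimate, exploiting again $\div\ppsi_E$ being piecewise constant so that any residual lives in $\PP^0(\TT)$ and can be absorbed. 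Once $\ssigma_h$ is in hand, set $\alpha_E$ to be its Raviart--Thomas coefficients, so $\phi=\div\ssigma_h=\sum_E\alpha_E\div\ppsi_E$; then by \cref{lem:scaling}, $\norm{\alpha_E\div\ppsi_E}{-1}\simeq\norm{\alpha_E\ppsi_E}{}$, and by the $L^2$-equivalence Proposition together with the local bound \cref{eq:RT:localbound}, $\sum_E\norm{\alpha_E\ppsi_E}{}^2\lesssim\sum_{T\in\TT}\norm{\Pi^\div\ssigma_h}{T}^2\lesssim\sum_{T\in\TT}\big(\norm{\ssigma_h}{T}^2+h_T^{2s}\snorm{\ssigma_h}{s,T}^2\big)\lesssim\sum_{T\in\TT}\norm{\ssigma}{\patch(T)}^2\lesssim\norm{\ssigma}{}^2\lesssim\norm{\phi}{-1}^2$, where the penultimate step uses finite overlap of patches. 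I expect the technical heart of the paper to be precisely the construction of this regularized lift with scale-invariant bounds, which is plausibly the ``technical result'' deferred to the appendix; the rest is the bookkeeping sketched above.
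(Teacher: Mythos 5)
Your outline of the upper bound is correct in its ``cleaner variant''; the first attempt (triangle inequality plus a colouring argument to pass from $\ell^1$ to $\ell^2$) does not work as stated, since finite overlap controls $\norm{\sum_E\alpha_E\ppsi_E}{}^2$ by $\sum_E\norm{\alpha_E\ppsi_E}{}^2$ but cannot convert the $\ell^1$ sum $\sum_E\norm{\phi_E}{-1}$ into an $\ell^2$ sum without a factor of $\sqrt{\#\EE}$. The paper in effect runs your cleaner variant: $\norm{\phi}{-1}\le\norm{\sum_E\alpha_E\ppsi_E}{}$, then local linear independence of the $\ppsi_E$ on each element, then \cref{lem:scaling}.

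For the lower bound your skeleton (Poisson lift $\ssigma=\nabla u$, commuting Raviart--Thomas interpolant, \cref{lem:scaling} and \cref{eq:RT:localbound}) matches the paper, and you correctly identify the regularity gap as the crux. But you miss the idea that resolves it: \emph{no smoothing is needed}. Since $\phi\in\PP^0(\TT)\subset L^2(\Omega)$, the elliptic-regularity shift \cref{eq:regularity} yields $u\in H^{1+s}(\Omega)$ for some $s\in(1/2,1]$ directly, hence $\ssigma=\nabla u\in H^s(\Omega)^n$ and $\Pi^\div\ssigma$ is well defined without any regularization. The technical ingredient the paper then establishes (and whose proof occupies the appendix) is \cref{lem:localreg}, a \emph{local} elliptic regularity estimate $h_T^s\snorm{\nabla u}{s,T}\lesssim\norm{\nabla u}{\patch(T)}+h_T\norm{f}{\patch(T)}$ that converts the fractional seminorm in \cref{eq:RT:localbound} into patchwise $L^2$ quantities; these are then controlled by $\norm{\nabla u}{}\le\norm{\phi}{-1}$ and the inverse inequality \cref{eq:invineq}. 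So the appendix result is local regularity for the exact Poisson solution, not the ``regularized lift'' you conjecture.

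The smoothing remedy you propose also has a genuine problem on its own terms: $L^2$-stable quasi-interpolants into $\HH(\div;\Omega)$-conforming spaces do not generally commute with $\div$ on merely $L^2$ fields, so $\div\ssigma_h$ need not equal $\phi$, and constructing a correction $\delta$ with $\div\delta=\phi-\div\ssigma_h$ and controlled $L^2$ norm is essentially the same lifting problem you began with, creating a circularity risk. The paper sidesteps all of this because the exact Poisson solution already supplies the required $H^s$ regularity; that observation is the missing idea in your proof.
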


\subsection{Subspace decomposition of $\PP^0(\TT)$ in $\widetilde H^{-1}(\Omega)$}\label{sec:decomp:tildeHm1}
Set $\XX_0 := \linhull\{1\}\subseteq \PP^0(\TT) =: \YY$.
We consider the splitting
\begin{align}\label{eq:splitting:tildeHm1}
  \YY = \XX_0 + \sum_{E\in\EE^\Omega} \XX_E
\end{align}
and the associated norm $\enorm\cdot_{\YY}$.
Again, the right-hand side of~\cref{eq:splitting:tildeHm1} defines a decomposition of
$\PP^0(\TT)$. A proof is omitted since it is implicitly contained in the proof of our second main result.
(In~\cref{sec:proof:tildeHm1}, a decomposition is constructed for any $\phi\in \PP^0(\TT)$.)
\begin{theorem}\label{thm:tildeHm1}
  There exists $C>0$ which only depends on $\Omega$, $n$, and the shape regularity of $\TT$ such that
  \begin{align}\label{eq:equiv:tildeHm1}
    C^{-1} \enorm{\phi}_{\YY} \leq \norm{\phi}{-1,\sim} \leq C \enorm{\phi}_{\YY} \quad\text{for all } 
    \phi \in \PP^0(\TT).
  \end{align}
\end{theorem}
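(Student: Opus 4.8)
\textbf{Proof plan for \cref{thm:tildeHm1}.}
The plan is to reduce the $\widetilde H^{-1}$-estimate to the already-established $H^{-1}$-estimate of \cref{thm:Hm1} by splitting off the mean value. Given $\phi\in\PP^0(\TT)$, write $\phi = c\,\onevec + \phi_0$ with $c := |\Omega|^{-1}\ip{\phi}1$ and $\phi_0 := \phi - c\onevec \in L_*^2(\Omega)$. The first observation is the norm splitting $\norm{\phi}{-1,\sim}^2 \simeq c^2\norm{\onevec}{-1,\sim}^2 + \norm{\phi_0}{-1,\sim}^2$: one direction is the triangle inequality, the other uses that $\ip{\phi_0}{1}=0$ so that testing the supremum defining $\norm{\phi}{-1,\sim}$ against the constant recovers $c$, while testing against functions orthogonal to constants recovers $\norm{\phi_0}{-1,\sim}$ up to equivalence (the Poincar\'e inequality on $\Omega$ gives $\norm{v - \bar v}{1}\simeq\norm{\nabla v}{}$, so that on $L_*^2$ the norms $\norm\cdot{-1,\sim}$ and $\norm\cdot{-1}$ are equivalent).

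Second, for the zero-mean part I would invoke \cref{thm:Hm1}: since $\norm{\phi_0}{-1,\sim}\simeq\norm{\phi_0}{-1}\simeq\enorm{\phi_0}_{\XX}$, where the additive Schwarz norm $\enorm\cdot_\XX$ is defined via the full edge decomposition $\XX = \sum_{E\in\EE}\XX_E$, it remains to pass from the full decomposition to the interior-edge decomposition $\sum_{E\in\EE^\Omega}\XX_E$. Here one uses that $\phi_0$ has zero mean together with the identity $\sum_{E\in\EE^\Gamma}\pm\div\ppsi_E$ being (a multiple of) a characteristic-function combination that can be absorbed: more concretely, since $\div(\RT^0(\TT))\cap L_*^2(\Omega)$ is spanned by $\{\div\ppsi_E : E\in\EE^\Omega\}$ (the boundary Raviart--Thomas functions contribute exactly the ``outflow'' degrees of freedom needed to change the mean), any representation of $\phi_0$ using boundary edges can be rewritten using interior edges at comparable cost, invoking \cref{lem:scaling} to control the $H^{-1}$-norms of the $\div\ppsi_E$ involved in the rebalancing. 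Combining gives $\norm{\phi_0}{-1,\sim}\simeq\enorm{\phi_0}_{\YY'}$ where $\YY'$ carries only the interior-edge spaces.

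Third, for the constant part, the single subspace $\XX_0 = \linhull\{\onevec\}$ contributes exactly $\enorm{c\onevec}_{\YY}^2$-piece equal to $c^2\norm{\onevec}{-1,\sim}^2$ (since $\onevec\notin\sum_{E\in\EE^\Omega}\XX_E$, any decomposition of $c\onevec$ must use $\XX_0$ with the full coefficient, and the orthogonality $\ip{\onevec}{\div\ppsi_E}_{-1,\sim}$-type considerations show the cross terms do not help). Then assembling the optimal decompositions of $c\onevec$ and $\phi_0$ yields a decomposition of $\phi$ in the splitting~\cref{eq:splitting:tildeHm1} with total energy $\simeq c^2\norm{\onevec}{-1,\sim}^2 + \norm{\phi_0}{-1,\sim}^2 \simeq \norm{\phi}{-1,\sim}^2$, which is the upper bound $\enorm{\phi}_\YY\lesssim\norm{\phi}{-1,\sim}$; the lower bound $\norm{\phi}{-1,\sim}\lesssim\enorm{\phi}_\YY$ follows from the triangle inequality applied to any decomposition together with $\norm{\onevec}{-1,\sim}\lesssim 1$ and \cref{lem:scaling}.

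\textbf{Main obstacle.} The delicate step is the second one: transferring the additive Schwarz stability from the full edge set $\EE$ to the interior edge set $\EE^\Omega$ for zero-mean piecewise constants. One must show that discarding the boundary-facet subspaces costs at most a constant factor — i.e., that an $H^{-1}$-stable decomposition into $\{\div\ppsi_E\}_{E\in\EE}$ can be reorganized into an equally stable decomposition into $\{\div\ppsi_E\}_{E\in\EE^\Omega}$ plus the one-dimensional space $\XX_0$. This requires a quantitative argument that the boundary contributions, which near $\Gamma$ encode the mean, can be rerouted through interior edges along bounded-length paths in the mesh with uniformly controlled constants, which is where shape regularity and the connectedness of $\Omega$ enter. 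I expect this is precisely why the authors say the decomposition proof is ``implicitly contained'' in \cref{sec:proof:tildeHm1} — the explicit construction there simultaneously proves that~\cref{eq:splitting:tildeHm1} is a decomposition and that it is stable.
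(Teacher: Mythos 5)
Your plan has a genuine gap at step two: the claim that the norms $\norm{\cdot}{-1,\sim}$ and $\norm{\cdot}{-1}$ are equivalent on $L_*^2(\Omega)$ is false. The Poincar\'e argument shows $\norm{v-\bar v}{1}\simeq\norm{\nabla v}{}$ for $v\in H^1(\Omega)$, but that only buys you $\norm{\phi_0}{-1,\sim}\simeq\sup_{w\in H^1\cap L_*^2}\ip{\phi_0}{w}/\norm{\nabla w}{}$, and the test space $H^1\cap L_*^2$ is strictly larger than $\{v-\bar v:v\in H_0^1\}$: the two dual norms see boundary behavior very differently. Concretely, a zero-mean $\phi$ concentrated near $\Gamma$ (e.g.\ a piecewise constant taking opposite-sign values on two boundary elements) can have $\norm{\phi}{-1}\to 0$ while $\norm{\phi}{-1,\sim}$ stays bounded away from zero, because $H_0^1$ test functions vanish on $\Gamma$ whereas $H^1$ test functions do not. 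So you cannot invoke \cref{thm:Hm1} for the zero-mean part; the estimate it gives is in the wrong norm. The second step you correctly flag as the ``main obstacle'' --- rerouting boundary-facet contributions to interior facets --- is also not salvageable in the form you sketch: any combinatorial rerouting along chains of interior facets would involve $O(h^{-1})$ facets per chain, and the sum of squares of local $\widetilde H^{-1}$ contributions along such a chain would not be bounded uniformly in $h$.

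The paper avoids both problems by never passing through $\norm{\cdot}{-1}$ and never rerouting. For the lower bound it solves a \emph{Neumann} problem $-\Delta u=-\phi_*$, $\partial_{\normal}u|_\Gamma=0$, rather than a Dirichlet problem. Then $\ssigma=\nabla u$ satisfies $\ssigma\cdot\normal_E=0$ on boundary facets, so the Raviart--Thomas projection $\Pi^{\div}\ssigma=\sum_E\alpha_E\ppsi_E$ automatically has $\alpha_E=0$ for all $E\in\EE^\Gamma$: the decomposition of $\phi_*$ uses \emph{only} interior facets by construction, with no rebalancing needed. The local regularity estimate (\cref{lem:localreg:neumann}, a Neumann analogue of \cref{lem:localreg}) then bounds the local RT-interpolation error directly in terms of $\widetilde H^{-1}$-compatible quantities, and the rest mirrors the proof of \cref{thm:Hm1}. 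Your step one (the splitting $\phi=\phi_0+\phi_*$ and the norm equivalence~\cref{eq:splitting:const}) and your step three (handling the constant via $\XX_0$) are fine and match the paper; the missing idea is to build the interior-facet decomposition directly from the Neumann problem rather than trying to transplant the Dirichlet one.
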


\subsection{Subspace decomposition of $\PP^p(\TT)$ in $H^{-1}(\Omega)$}\label{sec:decomp:Hm1:Pp}
Let $\XX_p := \PP^p(\TT)$, $\XX_{T,j} := \linhull\{\chi_{T,j}\}$. We consider the decomposition
\begin{align*}
  \XX_p = \sum_{E\in\EE} \XX_E + \sum_{T\in\TT} \sum_{j\in \JJ} \XX_{T,j} \quad\text{with associated norm }
  \enorm{\cdot}_{\XX_p}.
\end{align*}

\begin{theorem}\label{thm:Hm1:Pp}
  There exists $C>0$ which only depends on $\Omega$, $n$, $p$, and the shape regularity of $\TT$ such that
  \begin{align}\label{eq:equiv:Hm1:Pp}
    C^{-1} \enorm{\phi}_{\XX_p} \leq \norm{\phi}{-1} \leq C \enorm{\phi}_{\XX_p} \quad\text{for all } 
    \phi \in \PP^p(\TT).
  \end{align}
\end{theorem}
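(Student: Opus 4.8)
The plan is to bootstrap from Theorem~\ref{thm:Hm1}, which already gives the equivalence for the $\PP^0(\TT)$ part decomposed into the divergences $\div\ppsi_E$. Write any $\phi\in\PP^p(\TT)$ as $\phi = \phi_0 + \phi_\perp$ where $\phi_0 := \Pi^0\phi \in \PP^0(\TT)$ is the $L^2(\Omega)$ projection and $\phi_\perp := \phi - \phi_0$. By construction $\phi_\perp|_T \in \PP^p(T)$ has vanishing mean on every $T\in\TT$, so $\phi_\perp = \sum_{T\in\TT}\sum_{j\in\JJ} c_{T,j}\chi_{T,j}$ in the mean-zero basis. The decomposition of $\XX_p$ in the theorem is precisely the sum of the $\XX_E$-decomposition of the $\PP^0$-component and the (trivially orthogonal, disjointly supported) decomposition of the mean-zero complement into the one-dimensional spaces $\XX_{T,j}$.

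\textbf{Lower bound (stability of the decomposition).} First I would bound $\enorm{\phi}_{\XX_p}^2 \lesssim \norm{\phi}{-1}^2$. Take the decomposition $\phi = \phi_0 + \phi_\perp$ just described, split $\phi_0$ optimally in the $\XX_E$'s using Theorem~\ref{thm:Hm1}, and split $\phi_\perp$ in the obvious way over $\XX_{T,j}$. This gives
\begin{align*}
  \enorm{\phi}_{\XX_p}^2 \;\lesssim\; \enorm{\phi_0}_{\XX}^2 + \sum_{T\in\TT}\sum_{j\in\JJ}\norm{c_{T,j}\chi_{T,j}}{-1}^2
  \;\lesssim\; \norm{\phi_0}{-1}^2 + \sum_{T\in\TT}\norm{\phi_\perp}{-1,T}^2,
\end{align*}
where on each element I used finite-dimensionality to replace the local basis expansion by a single local $H^{-1}$-type norm (up to constants depending on $n,p$ and shape regularity). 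The term $\sum_T\norm{\phi_\perp}{-1,T}^2$ is controlled by the inverse inequality~\cref{eq:invineq}: since $\phi_\perp$ is mean-zero on $T$ one has $\norm{\phi_\perp}{-1,T}\simeq h_T\norm{\phi_\perp}{T}$, and $\sum_T h_T^2\norm{\phi_\perp}{T}^2 = \norm{h\phi_\perp}{}^2 \lesssim \norm{h\phi}{}^2 \lesssim \norm{\phi}{-1}^2$, using $L^2$-stability of $\Pi^0$ and $h_{T'}\simeq h_T$ on patches. Likewise $\norm{\phi_0}{-1}\lesssim\norm{\phi}{-1} + \norm{\phi_\perp}{-1}$ and the second term is again absorbed via $\norm{h\phi_\perp}{}\lesssim\norm{h\phi}{}\lesssim\norm{\phi}{-1}$. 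This closes the lower bound.

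\textbf{Upper bound (quasi-orthogonality).} Conversely I must show $\norm{\phi}{-1} \lesssim \enorm{\phi}_{\XX_p}$. Given an arbitrary admissible decomposition $\phi = \sum_E \beta_E\div\ppsi_E + \sum_{T,j} c_{T,j}\chi_{T,j}$, set $g := \sum_E\beta_E\div\ppsi_E\in\PP^0(\TT)$ and $w := \sum_{T,j}c_{T,j}\chi_{T,j}$, so $\phi = g+w$ and $\norm{\phi}{-1}\le\norm{g}{-1}+\norm{w}{-1}$. For $g$, Theorem~\ref{thm:Hm1} gives $\norm{g}{-1}\lesssim\enorm{g}_{\XX}\le(\sum_E\norm{\beta_E\div\ppsi_E}{-1}^2)^{1/2}$ (the particular splitting is a competitor in the infimum defining $\enorm{g}_{\XX}$). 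For $w$, the functions $\chi_{T,j}$ have pairwise disjoint (essentially) supports and are mean-zero on each $T$, so $\norm{w}{-1}^2 \lesssim \norm{hw}{}^2 = \sum_T h_T^2\norm{w}{T}^2 \simeq \sum_T\sum_j\norm{c_{T,j}\chi_{T,j}}{-1}^2$, again by the inverse inequality and local norm equivalence. Taking the infimum over all admissible decompositions yields $\norm{\phi}{-1}^2 \lesssim \enorm{\phi}_{\XX_p}^2$.

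\textbf{Main obstacle.} The delicate point is the interaction between the $\PP^0$-component and the mean-zero complement: the $H^{-1}$ norm does not split orthogonally as $\norm{\phi}{-1}^2 = \norm{\phi_0}{-1}^2 + \norm{\phi_\perp}{-1}^2$, so the argument must everywhere route the mean-zero piece through the inverse inequality $\norm{h\phi_\perp}{}\lesssim\norm{\phi}{-1}$ (for the lower bound) and through disjoint supports plus $\norm{hw}{}\lesssim\norm{w}{-1}$-type estimates (for the upper bound), rather than relying on any clean orthogonal decomposition. One must also be careful that the local norm equivalences on each simplex are uniform, which follows from shape regularity together with the standard scaling argument already invoked for \cref{eq:invineq} and in \cref{lem:scaling}. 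Modulo these bookkeeping points the result reduces cleanly to Theorem~\ref{thm:Hm1} and the inverse inequality.
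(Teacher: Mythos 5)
Your proof is correct and follows essentially the same route as the paper: split $\phi = \Pi^0\phi + (1-\Pi^0)\phi$, handle the piecewise-constant part via Theorem~\ref{thm:Hm1}, and handle the elementwise mean-zero part via the equivalence $\norm{\cdot}{-1}\simeq\norm{h\cdot}{}$ (which the paper isolates as Lemma~\ref{lem:L2proj}, combining the approximation property of $\Pi^0$ with the inverse inequality~\cref{eq:invineq}). The only cosmetic slip is in your closing summary, where you wrote $\norm{hw}{}\lesssim\norm{w}{-1}$ for the upper bound whereas the direction actually used there (and correctly used in the body of your argument) is $\norm{w}{-1}\lesssim\norm{hw}{}$.
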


\subsection{Subspace decomposition of $\PP^p(\TT)$ in $\widetilde H^{-1}(\Omega)$}\label{sec:decomp:tildeHm1:Pp}
There holds a similar decomposition:
\begin{align*}
  \YY_p = \XX_0  +  \sum_{E\in\EE^\Omega} \XX_E + \sum_{T\in\TT} \sum_{j\in \JJ} \XX_{T,j}
  \quad\text{with associated norm } \enorm{\cdot}_{\YY_p}.
\end{align*}

\begin{theorem}\label{thm:tildeHm1:Pp}
  There exists $C>0$ which only depends on $\Omega$, $n$, $p$, and the shape regularity of $\TT$ such that
  \begin{align}\label{eq:equiv:tildeHm1:Pp}
    C^{-1} \enorm{\phi}_{\YY_p} \leq \norm{\phi}{-1,\sim} \leq C \enorm{\phi}_{\YY_p} \quad\text{for all } 
    \phi \in \PP^p(\TT).
  \end{align}
\end{theorem}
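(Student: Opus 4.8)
The plan is to reduce this statement to the two building blocks already established, namely \cref{thm:tildeHm1} (the degree-$0$ decomposition in $\widetilde H^{-1}(\Omega)$) and \cref{thm:Hm1:Pp} (the degree-$p$ decomposition in $H^{-1}(\Omega)$), exploiting the fact that the higher-order bubble functions $\chi_{T,j}$ have the cancellation property $\ip{\chi_{T,j}}1=0$ and local support, so they behave identically in $H^{-1}(\Omega)$ and $\widetilde H^{-1}(\Omega)$. Concretely, for $\phi\in\PP^p(\TT)$ I would split $\phi=\phi_0+\phi_p$ where $\phi_0:=\Pi^0\phi\in\PP^0(\TT)$ is the $L^2$-projection onto piecewise constants and $\phi_p:=\phi-\phi_0$ is the "higher-order part," which satisfies $\ip{\phi_p}{1}_T=0$ on every $T\in\TT$ and hence can be written as $\phi_p=\sum_{T}\sum_{j\in\JJ}c_{T,j}\chi_{T,j}$.

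The upper bound $\norm{\phi}{-1,\sim}\lesssim\enorm{\phi}_{\YY_p}$ is the easy direction: given any admissible decomposition $\phi=\phi^{(0)}+\sum_{E\in\EE^\Omega}\phi_E+\sum_{T,j}\phi_{T,j}$ with $\phi^{(0)}\in\XX_0$, $\phi_E\in\XX_E$, $\phi_{T,j}\in\XX_{T,j}$, apply the triangle inequality in $\widetilde H^{-1}(\Omega)$ together with finite overlap of the supports (each element lies in boundedly many patches). For the constant function one uses $\norm{\phi^{(0)}}{-1,\sim}\lesssim\norm{\phi^{(0)}}{}$; for the Raviart--Thomas pieces one uses the second line of \cref{lem:scaling}, $\norm{\div\ppsi_E}{-1,\sim}\le\norm{\ppsi_E}{}\simeq h_E\norm{\div\ppsi_E}{}$; for the bubble pieces one uses the inverse inequality $\norm{h\chi_{T,j}}{}\lesssim\norm{\chi_{T,j}}{-1,\sim}$ from \cref{eq:invineq} (or, more directly, $\norm{\phi_{T,j}}{-1,\sim}\lesssim\norm{h\phi_{T,j}}{}$ by a scaling argument on a single element). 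Summing the local contributions and using the finite-overlap Cauchy--Schwarz trick gives $\norm{\phi}{-1,\sim}^2\lesssim\sum\norm{\cdot}{\HHH}^2$, and taking the infimum yields the bound.

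For the lower bound $\enorm{\phi}_{\YY_p}\lesssim\norm{\phi}{-1,\sim}$ I would construct an explicit decomposition. Apply \cref{thm:tildeHm1} to $\phi_0=\Pi^0\phi$ to obtain $\phi_0=\phi^{(0)}+\sum_{E\in\EE^\Omega}\phi_E$ with $\sum\norm{\cdot}{}^2$-type control $\lesssim\enorm{\phi_0}_{\YY}^2\simeq\norm{\phi_0}{-1,\sim}^2$; then take $\phi_{T,j}:=c_{T,j}\chi_{T,j}$ for the higher-order part $\phi_p$. It remains to bound $\norm{\phi_0}{-1,\sim}$ and $\sum_{T,j}\norm{\phi_{T,j}}{-1,\sim}^2$ by $\norm{\phi}{-1,\sim}^2$. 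The key point is the $\widetilde H^{-1}$-stability of $\Pi^0$: one needs $\norm{\Pi^0\phi}{-1,\sim}\lesssim\norm{\phi}{-1,\sim}$ for $\phi\in\PP^p(\TT)$, and then $\norm{\phi_p}{-1,\sim}=\norm{\phi-\phi_0}{-1,\sim}\lesssim\norm{\phi}{-1,\sim}$; combined with the localized equivalence $\sum_{T,j}\norm{c_{T,j}\chi_{T,j}}{-1,\sim}^2\simeq\sum_T h_T^2\norm{\phi_p}{T}^2\simeq\norm{h\phi_p}{}^2\lesssim\norm{\phi_p}{-1,\sim}^2$ (inverse inequality again, plus the $L^2$-orthogonality-on-each-element that makes $\norm{c_{T,j}\chi_{T,j}}{-1,\sim}\simeq h_T\norm{c_{T,j}\chi_{T,j}}{}$), this closes the estimate.

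The main obstacle is the $\widetilde H^{-1}(\Omega)$-stability of the $L^2$-projection $\Pi^0:\PP^p(\TT)\to\PP^0(\TT)$. The analogous statement in $H^{-1}(\Omega)$ is presumably handled in \cref{sec:proof:Pp} in the course of proving \cref{thm:Hm1:Pp}; I expect the $\widetilde H^{-1}$ version to follow by the same argument (it is a duality statement, and $\Pi^0$ is $L^2$-symmetric, so $\widetilde H^{-1}$-stability is equivalent to $H^1$-stability of $\mathrm{Id}-\Pi^0$ tested against piecewise polynomials combined with an Aubin--Nitsche/scaling argument), but care is needed because $\widetilde H^{-1}$ and $H^{-1}$ differ precisely by the behaviour against constants, and $\Pi^0$ reproduces constants — which is exactly what makes the argument work rather than fail. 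Once this stability is in hand, everything else is the finite-overlap/scaling bookkeeping sketched above and parallels the proof of \cref{thm:Hm1:Pp} almost verbatim, with $\norm{\cdot}{-1}$ replaced by $\norm{\cdot}{-1,\sim}$ and $\EE$ replaced by $\EE^\Omega\cup\{0\}$.
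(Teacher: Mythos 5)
Your proposal matches the paper's approach: the paper disposes of \cref{thm:tildeHm1:Pp} with the remark that it ``can be shown analogously'' to \cref{thm:Hm1:Pp}, and your sketch correctly unpacks what that means --- split $\phi$ via $\Pi^0$, apply \cref{thm:tildeHm1} to the piecewise-constant part $\Pi^0\phi$, and handle the bubble part $(1-\Pi^0)\phi$ using the cancellation $\ip{\chi_{T,j}}{1}=0$ together with a scaling/inverse-inequality argument. Your only hesitation, the $\widetilde H^{-1}(\Omega)$-stability of $\Pi^0$, is actually a non-issue: the proof of \cref{lem:L2proj} transfers verbatim, since the local approximation estimate $\norm{(1-\Pi^0)v}{T}\lesssim h_T\norm{\nabla v}{T}$ holds for all $v\in H^1(\Omega)$ (no boundary condition is used), one simply bounds $\norm{\nabla v}{}\le\norm{v}{1}$ when taking the supremum over $H^1(\Omega)$, and the inverse inequality \cref{eq:invineq} is already stated for the $\norm{\cdot}{-1,\sim}$ norm --- no duality or Aubin--Nitsche argument is needed.
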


\section{Proof of~\cref{thm:Hm1}}\label{sec:proof:Hm1}
Before we come to the proof let us collect some auxiliary results.
Let $f\in L^2(\Omega)$ be given and let $u\in H_0^1(\Omega)$ denote the unique weak solution of the Poisson equation
\begin{align}\label{eq:dirichlet}
  -\Delta u = f \quad\text{in }\Omega \quad\text{and}\quad u|_\Gamma = 0.
\end{align}
From elliptic regularity theory \cite{grisvard,dauge88} we know that there exists a regularity shift
$s=s(\Omega)\in(1/2,1]$ such that
\begin{align}\label{eq:regularity}
  \norm{u}{1+r} \lesssim \norm{f}{-1+r} \quad\text{for all } r\in[0,s].
\end{align}
We need the following local regularity result. 
Its proof follows along the argumentation given
in~\cite[Theorem~3.3]{AinsworthGuzmanSayas} with only minor modifications.
(The difference is that in~\cite{AinsworthGuzmanSayas} the authors consider the Poisson equation with $f=0$ and
non-trivial Neumann datum for $n=3$, whereas here we consider non-trivial $f$ and homogeneous Dirichlet datum for
$n\geq 2$. For completeness we give a proof in~\cref{app:localreg}.)
\begin{lemma}\label{lem:localreg}
  Let $f\in L^2(\Omega)$ and let $u\in H_0^1(\Omega)$ denote the solution of~\cref{eq:dirichlet}.
  It holds that 
  \begin{align*}
    h_T^s \snorm{\nabla u}{s,T} \lesssim \norm{\nabla u}{\patch(T)} + h_T \norm{f}{\patch(T)}
    \quad\text{for all } T\in\TT.
  \end{align*}
  Here, the involved constant only depends on $\Omega$ and the shape regularity of $\TT$.
  \qed
\end{lemma}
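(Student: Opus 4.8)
\emph{Strategy.} The plan is to localize the global elliptic regularity shift~\cref{eq:regularity} by a cutoff argument and then to carefully keep track of the powers of $h_T$ by a scaling argument, following the lines of \cite[Theorem~3.3]{AinsworthGuzmanSayas}; the differences to that reference (a nontrivial right-hand side $f$, homogeneous Dirichlet instead of Neumann data, and general $n\ge 2$) affect only routine details.

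\emph{Localization.} Fix $T\in\TT$ and write $\omega:=\patch(T)$, $h:=h_T$. Using shape regularity, choose $\eta\in C^\infty(\overline\Omega)$ with $0\le\eta\le 1$, $\eta\equiv 1$ on a neighborhood of $\overline T$ in $\omega$, $\supp\eta\subseteq\overline\omega$, and $\|D^j\eta\|_{L^\infty(\Omega)}\lesssim h^{-j}$ for $j\in\{0,1,2\}$. Set $w:=\eta u$; then $w\in H_0^1(\Omega)$, $\supp w\subseteq\overline\omega$, $\nabla w=\nabla u$ on $T$, and $w$ solves weakly $-\Delta w=g$ in $\Omega$ with $w|_\Gamma=0$, where $g:=\eta f-2\nabla\eta\cdot\nabla u-u\,\Delta\eta$ is supported in $\overline\omega$ and belongs to $L^2(\Omega)$.

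\emph{Regularity and bookkeeping of $h$.} Since $\snorm{\nabla u}{s,T}=\snorm{\nabla w}{s,T}\le\snorm{w}{1+s,\Omega}\le\norm{w}{1+s}$, applying~\cref{eq:regularity} to $w$ with $r=s$ yields $\snorm{\nabla u}{s,T}\lesssim\norm{g}{-1+s}$. The core of the proof is to upgrade this to $h^s\norm{g}{-1+s}\lesssim\norm{\nabla u}{\omega}+h\norm{f}{\omega}$. To this end one rescales $\omega$ to unit size --- admissible because, by shape regularity, these configurations (for $\overline T\cap\Gamma\ne\emptyset$ including the rescaled boundary pieces, which are portions of the finitely many tangent cones of the polyhedron $\Omega$) carry a regularity shift $\ge s$ with $T$-independent constants --- and splits $g=\eta f+g_0$, where $g_0:=-2\nabla\eta\cdot\nabla u-u\,\Delta\eta$ is supported in the collar $\supp\nabla\eta$ of width $\simeq h$. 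The commutator $g_0$ is then handled via $\norm{g_0}{\omega}\lesssim h^{-1}\norm{\nabla u}{\omega}+h^{-2}\norm{u}{\omega}$ together with the Poincar\'e bound $\norm{u}{\omega}\lesssim h\norm{\nabla u}{\omega}$, obtained by subtracting a suitable constant from $u$ if $\overline T\cap\Gamma=\emptyset$ (which keeps $w$ in $H_0^1(\Omega)$ since the correction is compactly supported) and directly from $u|_\Gamma=0$ if $\overline T\cap\Gamma\ne\emptyset$. The forcing $\eta f$ contributes a term of order $h^{1-s}\norm{f}{\omega}$, obtained by interpolating between the $H^1$- and $H^2$-norms of the auxiliary Dirichlet solution with right-hand side $\eta f$, whose $H^1$-size is $\lesssim h\norm{f}{\omega}$ by a Poincar\'e estimate at scale $h$ and whose $H^2$-size is $\lesssim\norm{f}{\omega}$ by plain elliptic regularity. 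Collecting the contributions and scaling back gives the claimed bound.

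\emph{Main obstacle.} The delicate point is exactly this bookkeeping of powers of $h$: a naive use of~\cref{eq:regularity} overshoots because it is not scale-invariant when the data are concentrated at scale $h$, so one must exploit that the cutoff commutators live in a collar of width $\simeq h$ (a Poincar\'e gain) and that the forcing enters only through its $H^{-1}$-size at scale $h$ (an interpolation gain), while keeping the Poincar\'e constants, the cutoff constants, and --- for boundary elements --- the local regularity shift uniform in $T$; this is where shape regularity and the polyhedral structure of $\Omega$ are used. The remaining ingredients --- the product rule for $\Delta(\eta u)$, the embedding $\snorm{\nabla w}{s}\le\norm{w}{1+s}$, and the affine transformation rules for Sobolev--Slobodetskij seminorms --- are routine.
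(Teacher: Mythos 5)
Your overall strategy is the paper's: localize with a cutoff, apply the global shift~\cref{eq:regularity} to $w=\eta u$ to obtain $\snorm{\nabla u}{s,T}\lesssim\norm{g}{-1+s}$ with $g=-\Delta(\eta u)=\eta f-2\nabla\eta\cdot\nabla u-u\Delta\eta$, and then extract a gain of $h_T^{1-s}$ from the fact that $g$ is supported at scale $h_T$. The divergence is in how that last gain is obtained, and that is where there is a genuine gap.

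You split $g=\eta f+g_0$ and treat the two pieces separately. The treatment you propose for $\eta f$ — interpolate between the $H^1$- and $H^2$-norms of the auxiliary Dirichlet solution with data $\eta f$ — cannot run as stated: the $H^2$ endpoint invokes ``plain elliptic regularity'' $\norm{v}{2}\lesssim\norm{\eta f}{}$, which is false on a non-convex polyhedron (only $H^{1+s}$ with $s<1$ is available, which is precisely the shift the lemma is trying to localize; on the L-shaped domains used in the paper's experiments the $r^{2/3}$ singularity already breaks it). The $H^1$ endpoint $\norm{\nabla v}{}\le\norm{\eta f}{-1}\lesssim h\norm{f}{\patch(T)}$ is also not ``a Poincar\'e estimate at scale $h$'': the test functions in that dual norm neither vanish on $\partial\patch(T)$ nor have zero mean there, so there is no scale-$h$ Poincar\'e gain to apply (what is true for $n\ge3$ follows instead from Sobolev embedding, and for $n=2$ the estimate picks up a logarithm, consistent with the known scaling $\norm{\chi_T}{-1}^2\simeq |T|^2$ only up to logarithmic factors, cf.~\cite{amt99}). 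In short, the stated route for the forcing term is wrong, and the ``rescale $\omega$ to unit size'' step is not actionable for $\norm{g}{-1+s}$, which is a dual norm over the whole domain $\Omega$, not over $\patch(T)$; nothing you write explains how the $L^2(\patch(T))$ bounds on $g_0$ and $\eta f$ are converted into the required weighted $H^{-1+s}(\Omega)$ bound.

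The paper handles all three contributions to $g$ at once and avoids every one of these difficulties. It keeps $g=-\Delta(\eta u)$ whole, observes that in the interior case ($|\partial\patch(T)\cap\Gamma|=0$) one has $\int_\Omega g\,dx=0$ since $\nabla(\eta u)\cdot\normal|_\Gamma=0$, and therefore may replace the test function $v$ by $v-v_{\patch(T)}$ inside the duality $\norm{g}{-1+s}=\sup_v\ip{g}{v}/\norm{v}{1-s}$. Because $\supp g\subset\overline{\patch(T)}$, the pairing reduces to $\patch(T)$, and the scaled fractional Poincar\'e inequality $\norm{v-v_{\patch(T)}}{\patch(T)}\lesssim h_T^{1-s}\snorm{v}{1-s,\patch(T)}$ supplies the entire $h_T^{1-s}$ gain; combined with the product rule and $\norm{u-u_{\patch(T)}}{\patch(T)}\lesssim h_T\norm{\nabla u}{\patch(T)}$ this gives the estimate without any auxiliary BVP and without $H^2$ regularity. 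In the boundary case the mean-value subtraction is unavailable, but then $v\in C_0^\infty(\Omega)$ vanishes on a facet of $\partial\patch(T)$, which gives the same gain via a Friedrichs inequality; your remark about using $u|_\Gamma=0$ for the boundary case is the right instinct but needs to be applied to the test function as well. If you want to keep the split $g=\eta f+g_0$, replace your auxiliary-problem/interpolation argument for $\eta f$ by the same direct duality bound $|\ip{\eta f}{v-v_{\patch(T)}}|\le\norm{f}{\patch(T)}\norm{v-v_{\patch(T)}}{\patch(T)}\lesssim h_T^{1-s}\norm{f}{\patch(T)}\snorm{v}{1-s,\patch(T)}$ that handles $g_0$; as written, the argument does not close.
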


\subsection{Proof of lower bound in~\cref{eq:equiv:Hm1}}
Let $\phi \in \PP^0(\TT)$ be given. Define $u\in H_0^1(\Omega)$ as the solution of~\cref{eq:dirichlet} with right-hand
side $f=-\phi$.
By~\eqref{eq:regularity} it holds that $u\in H^{1+s}(\Omega)$.
In particular $\ssigma:= \nabla u \in H^s(\Omega)^n$ and $\Pi^{\div}\ssigma$ is well defined.
Recall that
\begin{align*}
  \Pi^{\div}\ssigma = \sum_{E\in\EE} \alpha_E \ppsi_E, \text{ where } 
  \alpha_E = \frac1{|E|}\int_E \ssigma\cdot\normal_E \,ds_E.
\end{align*}
We set $\phi_E := \alpha_E \div\ppsi_E \in \XX_E$. Observe that (using the commutativity property of $\Pi^{\div}$)
\begin{align*}
  \sum_{E\in\EE} \phi_E = \sum_{E\in\EE} \div(\alpha_E\ppsi_E) = \div(\Pi^{\div}\ssigma) = \Pi^0\div\ssigma = \phi.
\end{align*}
Then,~\cref{lem:scaling} shows that
\begin{align*}
  \sum_{E\in\EE} \norm{\phi_E}{-1}^2 &= \sum_{E\in\EE} \norm{\alpha_E\div\ppsi_E}{-1}^2 \leq 
  \sum_{E\in\EE} \norm{\alpha_E\ppsi_E}{}^2 = \sum_{T\in\TT} \sum_{E\in\EE(T)} \norm{\alpha_E \ppsi_E}{T}^2 \\
  &\simeq \sum_{T\in\TT} \norm{\sum_{E\in\EE(T)}\alpha_E\ppsi_E}{T}^2 = \sum_{T\in\TT} \norm{\Pi^{\div}\ssigma}T^2.
\end{align*}
Using~\cref{eq:RT:localbound} together with~\cref{lem:localreg} we estimate the last term further by
\begin{align*}
  \sum_{T\in\TT} \norm{\Pi^{\div}\ssigma}T^2 &\lesssim \sum_{T\in\TT} \left(\norm{\ssigma}T^2 +
  h_T^{2s}\snorm{\ssigma}{s,T}^2 \right)
  \lesssim \norm{\nabla u}{}^2 + 
  \sum_{T\in\TT} h_T^2\norm{\phi}{T}^2.
\end{align*}
A standard estimate gives $\norm{\nabla u}{} \leq \norm{\phi}{-1}$ and with the inverse inequality~\cref{eq:invineq},
i.e.,
\begin{align*}
  \sum_{T\in\TT} h_T^2\norm{\phi}{T}^2 \lesssim \norm{\phi}{-1}^2,
\end{align*}
we conclude the proof of the lower bound in~\cref{eq:equiv:Hm1}. \qed

\subsection{Proof of upper bound in~\cref{eq:equiv:Hm1}}
Let $\phi\in \PP^0(\TT)$ and let $\phi_E\in\XX_E$ be arbitrary such that $\phi = \sum_{E\in\EE} \phi_E$.
Note that we can write $\phi_E = \alpha_E \div\ppsi_E$. This shows that
\begin{align*}
  \phi = \div(\sum_{E\in\EE}\alpha_E\ppsi_E).
\end{align*}
Using $\norm{\div(\cdot)}{-1}\leq \norm{\cdot}{}$ we get
\begin{align*}
  \norm{\phi}{-1}^2 &\leq \norm{\sum_{E\in\EE}\alpha_E\ppsi_E}{}^2 = \sum_{T\in\TT}
  \norm{\sum_{E\in\EE(T)}\alpha_E\ppsi_E}T^2 
  \lesssim \sum_{T\in\TT} \sum_{E\in\EE(T)} \norm{\alpha_E\ppsi_E}T^2 = \sum_{E\in\EE}\norm{\alpha_E\ppsi_E}{}^2.
\end{align*}
With the estimate
\begin{align*}
  \norm{\ppsi_E}{}\simeq h_T\norm{\div\ppsi_E}{} \lesssim \norm{\div\ppsi_E}{-1},
\end{align*}
see~\cref{lem:scaling}, we conclude the proof of the upper bound in~\cref{eq:equiv:Hm1}. 
\qed

\section{Proof of~\cref{thm:tildeHm1}}\label{sec:proof:tildeHm1}
The proof follows similar ideas as in~\cref{sec:proof:Hm1}.
Instead of the Dirichlet problem~\cref{eq:dirichlet}, we consider the following Neumann problem.
Let $f\in L_*^2(\Omega)$ be given and let $u\in H^1(\Omega)\cap L_*^2(\Omega)$ denote the unique weak solution of the
Poisson problem
\begin{align}\label{eq:neumann}
  -\Delta u = f \quad\text{in }\Omega \quad\text{and}\quad \nabla u\cdot\normal|_\Gamma = 0.
\end{align}
We note that the regularity~\cref{eq:regularity} also holds true for solutions of~\cref{eq:neumann}.

As in~\cref{sec:proof:Hm1} the proof of the next result follows along the argumentation given
in~\cite[Theorem~3.3]{AinsworthGuzmanSayas}.
\begin{lemma}\label{lem:localreg:neumann}
  Let $f\in L_*^2(\Omega)$ and let $u\in H^1(\Omega)\cap L_*^2(\Omega)$ denote the solution of~\cref{eq:neumann}.
  It holds that 
  \begin{align*}
    h_T^s \snorm{\nabla u}{s,T} \lesssim \norm{\nabla u}{\patch(T)} + h_T \norm{f}{\patch(T)}
    \quad\text{for all } T\in\TT.
  \end{align*}
  Here, the involved constant only depends on $\Omega$ and the shape regularity of $\TT$.
  \qed
\end{lemma}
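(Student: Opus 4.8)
The plan is to mimic the proof of \cref{lem:localreg} (which, per the text, itself follows \cite[Theorem~3.3]{AinsworthGuzmanSayas}), with the only change being the replacement of the homogeneous Dirichlet datum by a homogeneous Neumann datum. Since the estimate is purely local — it bounds the fractional seminorm of $\nabla u$ on a single element $T$ by $L^2$-quantities on the patch $\patch(T)$ — the global boundary condition plays essentially no role, and the argument is driven by interior elliptic regularity plus a scaling/covering argument. First I would fix $T\in\TT$ and introduce a fixed-size macro-patch around $T$ on which a local Caccioppoli-type estimate and interior regularity for $-\Delta u = f$ are available; on shape-regular meshes $\patch(T)$ has uniformly bounded diameter ratio $h_{T'}\simeq h_T$, so after rescaling to a reference configuration all constants are uniform.

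The key steps, in order, are: (i) rescale $T$ and $\patch(T)$ to unit size via the affine map $\xx\mapsto (\xx-\xx_T)/h_T$, tracking how the seminorm $\snorm{\cdot}{s,T}$, the $L^2$-norm $\norm{\cdot}{\patch(T)}$, and the equation $-\Delta u=f$ transform (the factors of $h_T$ are exactly the $h_T^s$ on the left and the $h_T$ in front of $\norm{f}{\patch(T)}$ on the right); (ii) on the reference configuration, apply interior elliptic regularity to obtain $\snorm{\nabla\hat u}{s,\hat T} \lesssim \norm{\hat u}{1,\hat\patch} + \norm{\hat f}{\hat\patch}$, using that $\hat T$ is compactly contained in the interior of $\hat\patch$ so no boundary condition is needed; (iii) absorb the full $H^1(\hat\patch)$ norm of $\hat u$ down to $\norm{\nabla\hat u}{\hat\patch} + \norm{\hat f}{\hat\patch}$ — here one must handle the constant/mean-value of $u$, which is where the Neumann case differs slightly from Dirichlet: a Poincaré inequality on the connected patch, applied to $u - \fint_{\patch(T)} u$, removes the dependence on the additive constant so that only $\nabla u$ (and $f$) survive; (iv) undo the scaling to recover the claimed bound on $T$. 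The normalization $u\in L_*^2(\Omega)$ in \cref{eq:neumann} is global, not patchwise, so step (iii) genuinely needs the local Poincaré argument rather than just invoking it.

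The main obstacle I expect is exactly step (iii) combined with making the interior-regularity constant uniform in $T$: one must verify that the reference patches arising from all $T\in\TT$ fall into a compact family of admissible configurations (finitely many shapes up to rigid motion, by shape regularity), so that a single constant works, and that the Poincaré constant on the (possibly non-convex, mesh-dependent) patch is likewise uniform. This is routine but requires care; it is precisely the point where "follows along the argumentation given in~\cite[Theorem~3.3]{AinsworthGuzmanSayas}" is doing the heavy lifting, and the minor modification is swapping the Neumann boundary term (which vanishes) for the volume term $\norm{f}{\patch(T)}$ and, if $T$ touches $\Gamma$, using a Neumann-type local estimate near the boundary instead of a purely interior one. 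Everything else — the density/approximation to justify integration by parts, the equivalence of fractional seminorms under affine maps — is standard and I would only sketch it, exactly as the statement is marked \qed with the detailed proof deferred.
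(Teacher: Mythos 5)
Your plan --- rescale $\patch(T)$ to a reference configuration, invoke local/interior elliptic regularity there, and use a patchwise Poincar\'e inequality to handle the additive constant --- is a genuinely different route from the one the paper takes. The paper's proof of the Dirichlet analogue, \cref{lem:localreg}, given in the appendix, does not rescale; instead it fixes a cutoff $\cutoff$ supported in $\patch(T)$ with $\norm{D^m\cutoff}{L^\infty(\patch(T))}\lesssim h_T^{-m}$, notes that $\cutoff u$ still satisfies the \emph{global} homogeneous boundary condition on $\Gamma$, and applies the global regularity shift \cref{eq:regularity} to $\cutoff u$ directly, giving $\norm{\cutoff u}{1+s}\lesssim\norm{\Delta(\cutoff u)}{-1+s}$; the powers of $h_T$ then come out of the product rule for $\Delta(\cutoff u)$, the scaling of $D^m\cutoff$, and a dual-norm estimate with a subtraction of local means. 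Your observation that the global normalization $u\in L_*^2(\Omega)$ does not help locally, so one must subtract the patchwise mean $u_{\patch(T)}$ and use a local Poincar\'e inequality, is correct and is indeed part of the paper's argument.

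The genuine gap in your plan is the treatment of boundary elements, in particular elements whose patches approach non-convex corners or edges of $\Gamma$ --- which is exactly where the regularity shift $s=s(\Omega)\le 1$ originates. Your proposed justification, that the rescaled reference patches form a compact (finitely many up to rigid motion) family of admissible configurations, fails there: as $T$ ranges over all elements and mesh sizes, the distance of $T$ to a corner vertex measured in units of $h_T$ varies continuously, so the rescaled boundary configurations do not form a finite family, and near a corner ``interior elliptic regularity'' simply does not apply --- one would instead need a uniform local $H^{1+s}$ estimate near the conical point (a Kondrat'ev-type estimate) with constants independent of where $T$ sits relative to the corner. Your assertion that ``the global boundary condition plays essentially no role'' cannot be right as stated: for interior elements one would in fact get the stronger shift $s=1$, and it is precisely the geometry of $\Gamma$ near boundary patches that forces $s<1$. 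The paper's cutoff device is what absorbs all of this into a single application of the global regularity constant of $\Omega$, avoiding any corner-by-corner analysis and any compactness argument. If you pursue your route you must either reproduce that corner analysis or fall back on the cutoff trick for boundary elements; in the latter case, for the Neumann problem you should also note that when $\patch(T)$ touches $\Gamma$ the cutoff has to be chosen with $\nabla\cutoff\cdot\normal=0$ on $\Gamma$ so that $\cutoff u$ again satisfies the homogeneous Neumann condition.
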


Throughout the remainder of this section, let $\phi \in \PP^0(\TT)$ be given.
We consider the (unique) splitting $\phi = \phi_0 + \phi_*$ where $\phi_0 := \frac1{|\Omega|}\int_\Omega \phi \,dx$.
Clearly, $\int_\Omega \phi_* \,dx = 0$.
Observe that
\begin{align*}
  \norm{\phi_0}{-1,\sim} \lesssim \norm{\phi}{-1,\sim},
\end{align*}
hence,
\begin{align}\label{eq:splitting:const}
  \norm{\phi}{-1,\sim} \simeq \norm{\phi_0}{-1,\sim} + \norm{\phi_*}{-1,\sim},
\end{align}
where the involved constants only depend on $\Omega$.

\subsection{Proof of lower bound in~\cref{eq:equiv:tildeHm1}}
Define $u\in H^1(\Omega)\cap L_*^2(\Omega)$ 
as the solution of~\cref{eq:neumann} with right-hand
side $f=-\phi_*$.
By~\eqref{eq:regularity} we have that $u\in H^{1+s}(\Omega)$, where $s\in(1/2,1]$ denotes the regularity shift.
In particular, $\ssigma:= \nabla u \in H^s(\Omega)^n$ and $\Pi^{\div}\ssigma$ is well defined.
Recall that
\begin{align*}
  \Pi^{\div}\ssigma = \sum_{E\in\EE} \alpha_E \ppsi_E, \text{ where } 
  \alpha_E = \frac1{|E|}\int_E \ssigma\cdot\normal_E \,ds_E.
\end{align*}
Note that $\ssigma\cdot\normal = \nabla u\cdot\normal = 0$ on the boundary $\Gamma$ and therefore,
$\alpha_E = 0$ for all $E\in\EE^\Gamma$.
We set $\phi_E := \alpha_E \div\ppsi_E \in \XX_E$. Observe that (using the commutativity property of $\Pi^{\div}$)
\begin{align*}
  \sum_{E\in\EE^\Omega} \phi_E = \sum_{E\in\EE^\Omega} \div(\alpha_E\ppsi_E) = \sum_{E\in\EE} \div(\alpha_E\ppsi_E) 
  = \div(\Pi^{\div}\ssigma) = \Pi^0\div\ssigma = \phi_*.
\end{align*}
Then, keeping in mind that $\alpha_E=0$ for $E\in\EE^\Gamma$,~\cref{lem:scaling} yields
\begin{align*}
  \sum_{E\in\EE^\Omega} \norm{\phi_E}{-1,\sim}^2 &= \sum_{E\in\EE} \norm{\alpha_E\div\ppsi_E}{-1,\sim}^2 \leq 
  \sum_{E\in\EE} \norm{\alpha_E\ppsi_E}{}^2 = \sum_{T\in\TT} \sum_{E\in\EE(T)} \norm{\alpha_E \ppsi_E}{T}^2 \\
  &\simeq \sum_{T\in\TT} \norm{\sum_{E\in\EE(T)}\alpha_E\ppsi_E}{T}^2 = \sum_{T\in\TT} \norm{\Pi^{\div}\ssigma}T^2.
\end{align*}
Using~\cref{eq:RT:localbound} together with~\cref{lem:localreg:neumann}
we can bound the last term as in~\cref{sec:proof:Hm1}.
Finally, with the equivalence~\cref{eq:splitting:const}
we conclude the proof of the lower bound in~\cref{eq:equiv:tildeHm1}.
\qed

\subsection{Proof of upper bound in~\cref{eq:equiv:tildeHm1}}
Let $\phi_E\in\XX_E$ be arbitrary such that $\phi_* = \sum_{E\in\EE^\Omega} \phi_E$.
Note that we can write $\phi_E = \alpha_E \div\ppsi_E$. This shows that
\begin{align*}
  \phi_* = \div(\sum_{E\in\EE^\Omega}\alpha_E\ppsi_E).
\end{align*}
Since $(\sum_{E\in\EE^\Omega}\alpha_E\ppsi_E)\cdot\normal = 0$ on $\Gamma$ it holds that
\begin{align*}
  \norm{\div\sum_{E\in\EE^\Omega}\alpha_E\ppsi_E}{-1,\sim}\leq \norm{\sum_{E\in\EE^\Omega}\alpha_E\ppsi_E}{}.
\end{align*}
Arguing as in~\cref{sec:proof:Hm1} together with the equivalence~\cref{eq:splitting:const} we finish the proof of the
upper bound in~\cref{eq:equiv:tildeHm1}. 
\qed

\section{Proof of~\cref{thm:Hm1:Pp,thm:tildeHm1:Pp}} \label{sec:proof:Pp}
We only consider the proof of~\cref{thm:Hm1:Pp}. \cref{thm:tildeHm1:Pp} can be shown analogously.

\begin{lemma}\label{lem:L2proj}
  The $L^2(\Omega)$ projection $\Pi^0$ restricted to $\PP^p(\TT)$ is bounded in $H^{-1}(\Omega)$.
  In particular, 
  \begin{align*}
    \norm{(1-\Pi^0)\phi}{-1}
    &\leq C_1 \norm{h(1-\Pi^0)\phi}{} \leq C_1\norm{h\phi}{} \leq C_1 C_2 \norm{\phi}{-1}, \\
    \norm{\Pi^0\phi}{-1} &\leq (1+C_1C_2)\norm{\phi}{-1}
  \end{align*}
  for all $\phi \in \PP^p(\TT)$, 
  where $C_1,C_2$ only depend on the shape regularity of $\TT$ and $C_2$ also depends on $p\in\N_0$ and $n$.
\end{lemma}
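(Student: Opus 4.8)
The plan is to prove the three inequality chains in order, the heart of the matter being the first bound $\norm{(1-\Pi^0)\phi}{-1} \lesssim \norm{h(1-\Pi^0)\phi}{}$, which expresses that $\Pi^0$ restricted to $\PP^p(\TT)$ is bounded in $H^{-1}(\Omega)$. First I would observe that the last two inequalities in the first chain are essentially free: $\norm{h(1-\Pi^0)\phi}{} \le \norm{h\phi}{}$ follows because $(1-\Pi^0)$ is an $L^2$-orthogonal projection and the weight $h$ is $\TT$-piecewise constant, so on each $T$ we have $\norm{h(1-\Pi^0)\phi}{T} = h_T\norm{(1-\Pi^0)\phi}{T} \le h_T\norm{\phi}{T} = \norm{h\phi}{T}$; summing over $T$ gives the claim. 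Then $\norm{h\phi}{} \le C_2\norm{\phi}{-1}$ is exactly the inverse inequality~\cref{eq:invineq}, valid for $\phi\in\PP^p(\TT)$, with $C_2$ depending on $p$, $n$ and the shape regularity. The second displayed chain then follows immediately from the first by the triangle inequality: $\norm{\Pi^0\phi}{-1} \le \norm{\phi}{-1} + \norm{(1-\Pi^0)\phi}{-1} \le (1+C_1C_2)\norm{\phi}{-1}$.

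So the real work is the first inequality $\norm{(1-\Pi^0)\phi}{-1} \le C_1\norm{h(1-\Pi^0)\phi}{}$. Writing $\psi := (1-\Pi^0)\phi$, this is a statement about an arbitrary $\TT$-piecewise polynomial $\psi\in\PP^p(\TT)$ with elementwise mean zero, namely $\norm{\psi}{-1} \lesssim \norm{h\psi}{}$. I would prove this by duality: for $v\in H_0^1(\Omega)$,
\begin{align*}
  \ip{\psi}{v} = \ip{\psi}{v-\Pi^0 v} = \sum_{T\in\TT} \ip{\psi}{v-\Pi^0 v}_T
  \le \sum_{T\in\TT} \norm{\psi}{T}\,\norm{v-\Pi^0 v}{T},
\end{align*}
where the first equality uses $\ip{\psi}{1}_T = 0$ on each $T$. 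Now the elementwise Poincaré (Bramble--Hilbert) estimate $\norm{v-\Pi^0 v}{T} \lesssim h_T\norm{\nabla v}{T}$, with constant depending only on shape regularity, gives
\begin{align*}
  \ip{\psi}{v} \lesssim \sum_{T\in\TT} h_T\norm{\psi}{T}\,\norm{\nabla v}{T}
  \le \Big(\sum_{T\in\TT} h_T^2\norm{\psi}{T}^2\Big)^{1/2}\Big(\sum_{T\in\TT}\norm{\nabla v}{T}^2\Big)^{1/2}
  = \norm{h\psi}{}\,\norm{\nabla v}{},
\end{align*}
by Cauchy--Schwarz over the elements. Dividing by $\norm{\nabla v}{}$ and taking the supremum over $v$ yields $\norm{\psi}{-1} \le C_1\norm{h\psi}{}$ with $C_1$ depending only on the shape regularity of $\TT$, as asserted.

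The main (minor) obstacle is simply making sure the constant bookkeeping matches the statement: the mean-zero property of $\psi=(1-\Pi^0)\phi$ is what lets us subtract $\Pi^0 v$ for free, the Poincaré constant on a shape-regular simplex depends only on $\gamma$ (hence $C_1$ is $p$- and $n$-free), while the inverse estimate contributes the $p$- and $n$-dependence of $C_2$. No regularity theory or patch arguments are needed here — this lemma is purely local plus one global Cauchy--Schwarz — which is why it is stated separately and used as a black box in the full proof of~\cref{thm:Hm1:Pp}.
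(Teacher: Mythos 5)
Your proof is correct and follows essentially the same route as the paper: using the orthogonality of $\Pi^0$ to write $\ip{(1-\Pi^0)\phi}{v}=\ip{(1-\Pi^0)\phi}{(1-\Pi^0)v}$ (your $\ip{\psi}{v}=\ip{\psi}{v-\Pi^0 v}$ is the same manipulation), applying the local approximation estimate $\norm{(1-\Pi^0)v}{T}\lesssim h_T\norm{\nabla v}{T}$ together with elementwise Cauchy--Schwarz, then invoking the inverse inequality~\cref{eq:invineq} and the triangle inequality. The constant bookkeeping you record matches the statement exactly.
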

\begin{proof}
  With the local approximation property $\norm{(1-\Pi^0)v}{T}\lesssim h_T\norm{\nabla v}T$ of the $L^2$ projector and the estimate
  $\norm{(1-\Pi^0)\phi}T\leq \norm{\phi}T$ we have
  \begin{align*}
    \norm{(1-\Pi^0)\phi}{-1} &= \sup_{0\neq v\in H_0^1(\Omega)} \frac{\ip{(1-\Pi^0)\phi}v}{\norm{\nabla v}{}}
    = \sup_{0\neq v\in H_0^1(\Omega)} \frac{\ip{(1-\Pi^0)\phi}{(1-\Pi^0)v}}{\norm{\nabla v}{}}
    \\
    &\lesssim \norm{h(1-\Pi^0)\phi}{} \leq \norm{h\phi}{}.
  \end{align*}
  The inverse inequality~\cref{eq:invineq} shows the first estimate.
  Using the triangle inequality we conclude the second estimate.
\end{proof}

\subsection{Proof of lower bound in~\cref{eq:equiv:Hm1:Pp}}
Let $\phi\in \PP^p(\TT)$ be given. We consider the $L^2$ orthogonal splitting
\begin{align*}
  \phi := \phi_0 + \phi_1 = \Pi^0\phi + (1-\Pi^0)\phi.
\end{align*}
From~\cref{thm:Hm1} we already know that we can split $\phi_0 = \sum_{E\in\EE} \phi_E$ such that
\begin{align*}
  \sum_{E\in\EE} \norm{\phi_E}{-1}^2 \lesssim \norm{\phi_0}{-1}^2 \lesssim \norm{\phi}{-1}^2,
\end{align*}
where for the last estimate we have used~\cref{lem:L2proj}.
The function $\phi_1$ is uniquely decomposed as
\begin{align*}
  \phi_1 = \sum_{T\in\TT} \sum_{j\in\JJ} \phi_{T,j} \quad\text{with } \phi_{T,j}\in \XX_{T,j}.
\end{align*}
Recall that $\ip{\phi_{T,j}}1 = 0$, hence, $\phi_{T,j} = (1-\Pi^0)\phi_{T,j}$ and with~\cref{lem:L2proj} we infer that
\begin{align*}
  \norm{\phi_{T,j}}{-1} \lesssim h_T\norm{\phi_{T,j}}{T}.
\end{align*}
Together with the inverse inequality~\cref{eq:invineq} we infer that
\begin{align*}
  \sum_{T\in\TT} \sum_{j\in\JJ} \norm{\phi_{T,j}}{-1}^2 \lesssim \sum_{T\in\TT} h_T^2 \sum_{j\in\JJ}
  \norm{\phi_{T,j}}{T}^2
 \simeq \sum_{T\in\TT} h_T^2 \norm{\sum_{j\in\JJ} \phi_{T,j}}{T}^2
 = \sum_{T\in\TT} h_T^2 \norm{\phi}{T}^2 \lesssim \norm{\phi}{-1}^2.
\end{align*}
The proof is finished by combining the estimates. \qed

\subsection{Proof of upper bound in~\cref{eq:equiv:Hm1:Pp}}
Let $\phi_E\in \XX_E$, $\phi_{T,j}\in\XX_{T,j}$ be given.
Set $\phi := \phi_0 + \phi_1 := \sum_{E\in\EE} \phi_E + \sum_{T\in\TT}\sum_{j\in\JJ}\phi_{T,j}$.
Observe that $\phi_0 = \Pi^0\phi$ and $\phi_1 = (1-\Pi^0)\phi$. From~\cref{thm:Hm1} we already know that
\begin{align*}
  \norm{\phi}{-1}^2 \lesssim \norm{\phi_0}{-1}^2 + \norm{\phi_1}{-1}^2 
  \lesssim \sum_{E\in\EE} \norm{\phi_E}{-1}^2 
  + \norm{\phi_1}{-1}^2.
\end{align*}
By~\cref{lem:L2proj} we get
\begin{align*}
  \norm{\phi_1}{-1}^2 &= \norm{(1-\Pi^0)\phi}{-1}^2 
  \lesssim \norm{h(1-\Pi^0)\phi}{}^2 
  = \sum_{T\in\TT} h_T^2 \norm{\phi_1}{T}^2 = \sum_{T\in\TT} h_T^2 
  \norm{\sum_{j\in\JJ}\phi_{T,j}}{T}^2 
  \\ &\simeq \sum_{T\in\TT} h_T^2 
  \sum_{j\in\JJ} \norm{\phi_{T,j}}{T}^2 
  \lesssim \sum_{T\in\TT} \sum_{j\in\JJ} \norm{\phi_{T,j}}{-1}^2,
\end{align*}
which concludes the proof.
\qed

\section{Matrix representation}\label{sec:matrix}
In this section we briefly discuss the matrix representation of the preconditioner associated
to the space decompositions given in~\cref{sec:main}.

We consider the splitting $\PP^0(\TT) = \sum_{E\in\EE} \XX_E$. The additive Schwarz operator $\AS : \PP^0(\TT) \to
\PP^0(\TT)$ is given by 
$\AS = \sum_{E\in\EE} \AS_E$, where $\AS_E : \PP^0(\TT) \to \XX_E$ is the projection within
$H^{-1}(\Omega)$, i.e,
\begin{align*}
  \ip{\AS_E\phi}{\phi_E}_{-1} = \ip{\phi}{\phi_E}_{-1} \quad\text{for all } \phi_E\in \XX_E.
\end{align*}
Let $\ASmat_E$, $\ASmat$, $\GALmat_E$, $\GALmat$ denote the matrix representations of $\AS_E$, $\AS$,
$\ip\cdot\cdot_{-1}$ on $\XX_E$, and $\ip\cdot\cdot_{-1}$ on $\PP^0(\TT)$, respectively.
Moreover, let $\Imat_E$ denote the matrix form of the canonical embedding $\XX_E \to \PP^0(\TT)$.
Then, following standard references (e.g.,~\cite[Chapter~2]{ToselliWidlund}) simple calculations show that
\begin{align*}
  \GALmat_E \ASmat_E = \Imat_E^t \GALmat \quad\text{or equivalently}\quad
  \ASmat_E = \GALmat_E^{-1} \Imat_E^t \GALmat,
\end{align*}
and the overall matrix representation is
\begin{align*}
  \ASmat = \sum_{E\in\EE} \Imat_E \GALmat_E^{-1} \Imat_E^t \GALmat =: \overline\PRECmat^{-1} \GALmat.
\end{align*}
The theory on additive Schwarz operators, together with~\cref{thm:Hm1}, shows that the condition number of
$\ASmat$ is uniformly bounded, that is, $\overline\PRECmat$ and $\GALmat$ are spectrally equivalent.
Note that the dimension of the spaces $\XX_E$ is one so that $\GALmat_E$ is just a scalar. 
We can rewrite the preconditioner matrix $\overline\PRECmat$ as
\begin{align*}
  \overline\PRECmat^{-1} = \Imat \overline\Dmat \Imat^t,
\end{align*}
where the $j$-th column of $\Imat \in \R^{\#\TT \times \#\EE}$ is given by $\Imat_{E_j}$ and $\overline\Dmat\in \R^{\#\EE\times
\#\EE}$ is the diagonal matrix
with entries $\overline\Dmat_{jk} = \delta_{jk} \norm{\div\ppsi_{E_j}}{-1}^{-2}$.

Note that $\div\ppsi_E$ has support on at most two elements, i.e., with the notation from~\cref{sec:main}, 
\begin{align*}
  \div\ppsi_E =
  \begin{cases}
    \frac{|E|}{|T^+|} \chi_{T^+} - \frac{|E|}{|T^-|} \chi_{T^-} & \text{if } E\in \EE^\Omega
    \text{ where } E = \overline{T^+}\cap\overline{T^-}, \\
    \frac{|E|}{|T|} \chi_{T} &\text{if } E\in \EE^\Gamma \text{ where } E\subset \partial T.
  \end{cases}
\end{align*}
This means that each column of $\Imat$ has at most two non-zero entries and, thus, $\Imat$ is a sparse matrix.

Note that $\norm{\div\ppsi_E}{-1}$ is not computable in general. However, thanks to the additive Schwarz theory,
we can replace $\overline\Dmat$ by a matrix $\Dmat$ with equivalent entries. 
By~\cref{lem:scaling} it holds that
\begin{align*}
  \norm{\div\ppsi_E}{-1} \simeq \norm{\ppsi_E}{} \simeq |T|^{1/2} \simeq \diam(E)^{n/2} \simeq |E|^{n/(2(n-1))}.
\end{align*}
Defining $\Dmat_{jk} = \delta_{jk} |E_j|^{-n/(n-1)}$, this leads to the preconditioner
\begin{align*}
  \PRECmat^{-1} := \Imat \Dmat \Imat^t.
\end{align*}
It is spectrally equivalent to $\overline\PRECmat^{-1}$. Therefore, the condition number of
$\PRECmat^{-1}\GALmat$ is uniformly bounded.

The very same approach can be used to define a preconditioner associated to the splitting 
\begin{align*}
  \PP^0(\TT) = \XX_0 + \sum_{E\in\EE} \XX_E,
\end{align*}
considered in~\cref{sec:decomp:tildeHm1}.
Given the diagonal matrix $\widetilde\Dmat\in \R^{\#\EE^\Omega\times \#\EE^\Omega}$ 
with entries $\widetilde\Dmat_{jk}= \delta_{jk}|E_j|^{-n/(n-1)}$ (here, $E_j\in\EE^\Omega$), 
and the constant vector $\onevec\in \R^{\#\TT\times 1}$ (with $\onevec_j = 1$), define
\begin{align*}
  \widetilde\PRECmat^{-1} = \alpha \onevec \onevec^t + \widetilde\Imat \widetilde\Dmat \widetilde\Imat^t,
\end{align*}
where the columns of $\widetilde \Imat \in \R^{\#\TT\times\#\EE^\Omega}$ are given by $\Imat_E$ ($E\in\EE^\Omega$).
The constant $\alpha\simeq 1$ can be freely chosen. Then, $\widetilde\PRECmat$ is an optimal preconditioner for the
Galerkin matrix of $\ip{\cdot}\cdot_{-1,\sim}$ on $\PP^0(\TT)$.
Note that the matrix $\onevec \onevec^t$ is fully populated. However, in practical situations we are only interested in
the application of $\widetilde\PRECmat^{-1}$ to a vector $\mathbf{x}$, which can be implemented efficiently since
$\onevec\onevec^t$ has rank one.

Finally, for the higher order case from~\cref{sec:decomp:Hm1:Pp}, we define the preconditioner matrix
\begin{align*}
  \PRECmat_p^{-1} = \begin{pmatrix}
    \PRECmat^{-1} & \mathbf{0} \\
    \mathbf{0} & \Dmat^{(p)}
  \end{pmatrix},
\end{align*}
where, with $N_p := (d(n,p)-1)\#\TT$, $\Dmat^{(p)} \in \R^{N_p\times N_p}$ is a diagonal matrix.
Its entries are given by $(|T|^{1/n}\norm{\chi_{T,j}}{})^{-2}$ (with some appropriate ordering of the basis
functions).

The preconditioner matrix for the splitting considered in~\cref{sec:decomp:tildeHm1:Pp} reads
\begin{align*}
  \widetilde\PRECmat_p^{-1} = \begin{pmatrix}
    \widetilde\PRECmat^{-1} & \mathbf{0} \\
    \mathbf{0} & \Dmat^{(p)}
  \end{pmatrix},
\end{align*}
which can be obtained with the same argumentation.

\section{Experiments} \label{sec:num}
In this section we present some experiments with $p=0,1$ and $n=2,3,4$ on uniformly and locally refined meshes.
In order to show that our proposed preconditioners from~\cref{sec:matrix} lead to uniformly bounded condition numbers we
need a mechanism to produce the $H^{-1}(\Omega)$ and $\widetilde H^{-1}(\Omega)$ norms.
Here, we consider the discrete $H^{-1}$ inner product from the seminal work~\cite{BLP97} (which can be extended to
locally refined meshes, see~\cref{sec:discreteHm1norm}).

The condition numbers $\kappa(\cdot)$ which are displayed in the figures are 
obtained as follows. We use the power iteration (resp., inverse power iteration) to approximate 
the largest (resp., smallest) eigenvalue of a matrix
(which is symmetric with respect to some inner product)
and then report on the condition number of that matrix as the ratio of the approximated extreme eigenvalues.

\subsection{Discrete $H^{-1}(\Omega)$ and $\widetilde H^{-1}(\Omega)$ norms}\label{sec:discreteHm1norm}
Let $\widetilde Q : L^2(\Omega) \to \PP^1(\TT)\cap H_0^1(\Omega)$ denote the $L^2(\Omega)$
projection. Given that $\widetilde Q$ is bounded, and
\[
   \norm{\nabla \widetilde Q v}{} \lesssim \norm{\nabla v}{}
   \quad\text{and}\quad
   \norm{(1-\widetilde Q)v}{T} \lesssim \norm{h_T\nabla v}{\patch(T)}
   \quad \forall v\in H_0^1(\Omega)
\]
(see, e.g., \cite{KPP13} for locally refined meshes under consideration) we follow~\cite{BLP97}.
First, observe that for $\phi\in \PP^p(\TT)$
\begin{align*}
  \norm{(1-\widetilde Q)\phi}{-1} = \sup_{0\neq v\in H_0^1(\Omega)} 
  \frac{\ip{(1-\widetilde Q)\phi}{v}}{\norm{\nabla v}{}}
  = \sup_{0\neq v\in H_0^1(\Omega)} 
  \frac{\ip{\phi}{(1-\widetilde Q)v}}{\norm{\nabla v}{}} \lesssim \norm{h\phi}{}.
\end{align*}
Second, define $u[\phi] \in \PP^1\cap H_0^1(\Omega)$ by
\begin{align*}
  \ip{\nabla u[\phi]}{\nabla v} = \ip{\phi}{v} \quad\text{for all }v\in \PP^1(\TT)\cap H_0^1(\Omega).
\end{align*}
Third, using boundedness we get
\begin{align*}
  \norm{\widetilde Q\phi}{-1} &= \sup_{0\neq v\in H_0^1(\Omega)} \frac{\ip{\widetilde Q\phi}{v}}{\norm{\nabla v}{}}
  = \sup_{0\neq v\in H_0^1(\Omega)} \frac{\ip{\phi}{\widetilde Qv}}{\norm{\nabla v}{}}
  = \sup_{0\neq v\in H_0^1(\Omega)} \frac{\ip{\nabla u[\phi]}{\nabla \widetilde Qv}}{\norm{\nabla v}{}} 
  \lesssim \norm{\nabla u[\phi]}{}.
\end{align*}
Then, 
\begin{align*}
  \norm{\phi}{-1}^2 \lesssim \norm{\widetilde Q \phi}{-1}^2 + \norm{(1-\widetilde Q)\phi}{-1}^2 
  \lesssim \norm{\nabla u[\phi]}{}^2 + \norm{h\phi}{}^2 \lesssim \norm{\phi}{-1}^2,
\end{align*}
where in the last step we have used that $\norm{\nabla u[\phi]}{} \leq \norm{\phi}{-1}$ and the inverse
inequality~\cref{eq:invineq}.
Now let $\eta_j$ denote the nodal basis functions of $\PP^1(\TT)\cap H_0^1(\Omega)$ and let $\chi_k$ denote
the basis functions of $\PP^p(\TT)$, where $\chi_k = \chi_{T_k}$ for $k=1,\dots,\#\TT$.
We replace the mesh-width function $h$ by the equivalent function $\widetilde h$ given elementwise by $\widetilde h|_T
:= |T|^{1/n}$.
Defining the matrices
\begin{align*}
  \Mmat_{j\ell} := \ip{\chi_k}{\eta_j}, \quad \Rmat_{jk} := \ip{\nabla \eta_j}{\nabla \eta_k}, 
  \quad \Lmat_{\ell m} := \ip{\widetilde h^2\chi_\ell}{\chi_m}
\end{align*}
($j,k = 1,\dots,\dim(\PP^1(\TT)\cap H_0^1(\Omega))$, $\ell,m=1,\dots,d(n,p)\#\TT$) and
relating $\phi\in \PP^p(\TT)$ with $\mathbf{x}\in \R^{d(n,p)\#\TT\times 1}$ by
$\phi = \sum_{j=1}^{d(n,p)\#\TT} \mathbf{x}_j \chi_j$,
our considerations above yield
\begin{align*}
  \mathbf{x}^t (\Mmat^t\Rmat^{-1}\Mmat + \Lmat)\mathbf{x} = 
  \norm{\nabla u[\phi]}{}^2 + \norm{\widetilde h\phi}{}^2 \simeq
  \norm{\nabla u[\phi]}{}^2 + \norm{h\phi}{}^2 \simeq
  \norm{\phi}{-1}^2.
\end{align*}
Therefore, we replace the matrix $\GALmat$ from~\cref{sec:matrix} by the computable matrix
\begin{align*}
  \GALmat := \Mmat^t\Rmat^{-1}\Mmat + \beta \Lmat,
\end{align*}
where $\beta\simeq 1$ can be chosen freely.

Following the same argumentation (with obvious modifications) we replace $\widetilde\GALmat$ from~\cref{sec:matrix}
with the matrix
\begin{align*}
  \widetilde\GALmat := \widetilde\Mmat^t\widetilde\Rmat^{-1}\widetilde\Mmat + \beta \Lmat.
\end{align*}
Here,
\begin{align*}
  \widetilde\Mmat_{j\ell} := \ip{\chi_\ell}{\eta_j}, \quad
  \widetilde\Rmat_{jk} := \ip{\nabla \eta_j}{\nabla \eta_k} + \ip{\eta_j}{\eta_k}
\end{align*}
($j,k=1,\dots,\dim(\PP^1(\TT)\cap H^1(\Omega))$, $\ell = 1,\dots,d(n,p)\#\TT$) where
$\eta_j$ now refers to the nodal basis functions of $\PP^1(\TT)\cap H^1(\Omega)$.

\subsection{Condition numbers for $n=2$}

\begin{figure}
  \begin{center}
    \begin{tikzpicture}
\begin{loglogaxis}[
width=0.49\textwidth,
cycle list/Dark2-6,
cycle multiindex* list={
mark list*\nextlist
Dark2-6\nextlist},
every axis plot/.append style={ultra thick},
xlabel={number of elements $\#\TT$},
grid=major,
legend entries={\small $\kappa(\mathbf{C}^{-1}\GALmat)$,\small $\kappa(\PRECmat^{-1}\GALmat)$},
legend pos=north west,
]
\addplot table [x=nE,y=condDiag] {Hm1Uniform.dat};
\addplot table [x=nE,y=condP] {Hm1Uniform.dat};
\end{loglogaxis}
\end{tikzpicture}
\begin{tikzpicture}
\begin{loglogaxis}[
width=0.49\textwidth,
cycle list/Dark2-6,
cycle multiindex* list={
mark list*\nextlist
Dark2-6\nextlist},
every axis plot/.append style={ultra thick},
xlabel={number of elements $\#\TT$},
grid=major,
legend entries={\small $\kappa(\mathbf{C}^{-1}\GALmat)$,\small $\kappa(\PRECmat^{-1}\GALmat)$},
legend pos=north west,
]
\addplot table [x=nE,y=condDiag] {Hm1Adaptive.dat};
\addplot table [x=nE,y=condP] {Hm1Adaptive.dat};
\end{loglogaxis}
\end{tikzpicture}
\begin{tikzpicture}
\begin{loglogaxis}[
width=0.49\textwidth,
cycle list/Dark2-6,
cycle multiindex* list={
mark list*\nextlist
Dark2-6\nextlist},
every axis plot/.append style={ultra thick},
xlabel={number of elements $\#\TT$},
grid=major,
legend entries={\small $\kappa(\mathbf{C}_1^{-1}\GALmat)$,\small $\kappa(\PRECmat_1^{-1}\GALmat)$},
legend pos=north west,
]
\addplot table [x=nE,y=condDiag] {Hm1UniformP1.dat};
\addplot table [x=nE,y=condP] {Hm1UniformP1.dat};
\end{loglogaxis}
\end{tikzpicture}
\begin{tikzpicture}
\begin{loglogaxis}[
width=0.49\textwidth,
cycle list/Dark2-6,
cycle multiindex* list={
mark list*\nextlist
Dark2-6\nextlist},
every axis plot/.append style={ultra thick},
xlabel={number of elements $\#\TT$},
grid=major,
legend entries={\small $\kappa(\mathbf{C}_1^{-1}\GALmat)$,\small $\kappa(\PRECmat_1^{-1}\GALmat)$},
legend pos=north west,
]
\addplot table [x=nE,y=condDiag] {Hm1AdaptiveP1.dat};
\addplot table [x=nE,y=condP] {Hm1AdaptiveP1.dat};
\end{loglogaxis}
\end{tikzpicture}
  \end{center}
  \caption{Condition numbers for $n=2$ and $p=0$ (upper panel) resp. $p=1$ (lower panel).
  The left column corresponds to uniform refinements and the right one to local refinements. The results shown
  correspond to the decompositions from~\cref{sec:decomp:Hm1,sec:decomp:Hm1:Pp}.}
  \label{fig:2d}
\end{figure}

\begin{figure}
  \begin{center}
    \begin{tikzpicture}
\begin{loglogaxis}[
width=0.49\textwidth,
cycle list/Dark2-6,
cycle multiindex* list={
mark list*\nextlist
Dark2-6\nextlist},
every axis plot/.append style={ultra thick},
xlabel={number of elements $\#\TT$},
grid=major,
legend entries={\small $\kappa(\mathbf{C}^{-1}\widetilde\GALmat)$,\small $\kappa(\widetilde\PRECmat^{-1}\widetilde\GALmat)$},
legend pos=north west,
]
\addplot table [x=nE,y=condDiag] {TildeHm1Uniform.dat};
\addplot table [x=nE,y=condP] {TildeHm1Uniform.dat};
\end{loglogaxis}
\end{tikzpicture}
\begin{tikzpicture}
\begin{loglogaxis}[
width=0.49\textwidth,
cycle list/Dark2-6,
cycle multiindex* list={
mark list*\nextlist
Dark2-6\nextlist},
every axis plot/.append style={ultra thick},
xlabel={number of elements $\#\TT$},
grid=major,
legend entries={\small $\kappa(\mathbf{C}^{-1}\widetilde\GALmat)$,\small $\kappa(\widetilde\PRECmat^{-1}\widetilde\GALmat)$},
legend pos=north west,
]
\addplot table [x=nE,y=condDiag] {TildeHm1Adaptive.dat};
\addplot table [x=nE,y=condP] {TildeHm1Adaptive.dat};
\end{loglogaxis}
\end{tikzpicture}
\begin{tikzpicture}
\begin{loglogaxis}[
width=0.49\textwidth,
cycle list/Dark2-6,
cycle multiindex* list={
mark list*\nextlist
Dark2-6\nextlist},
every axis plot/.append style={ultra thick},
xlabel={number of elements $\#\TT$},
grid=major,
legend entries={\small $\kappa(\mathbf{C}_1^{-1}\widetilde\GALmat)$,\small $\kappa(\widetilde\PRECmat_1^{-1}\widetilde\GALmat)$},
legend pos=north west,
]
\addplot table [x=nE,y=condDiag] {TildeHm1UniformP1.dat};
\addplot table [x=nE,y=condP] {TildeHm1UniformP1.dat};
\end{loglogaxis}
\end{tikzpicture}
\begin{tikzpicture}
\begin{loglogaxis}[
width=0.49\textwidth,
cycle list/Dark2-6,
cycle multiindex* list={
mark list*\nextlist
Dark2-6\nextlist},
every axis plot/.append style={ultra thick},
xlabel={number of elements $\#\TT$},
grid=major,
legend entries={\small $\kappa(\mathbf{C}_1^{-1}\widetilde\GALmat)$,\small $\kappa(\widetilde\PRECmat_1^{-1}\widetilde\GALmat)$},
legend pos=north west,
]
\addplot table [x=nE,y=condDiag] {TildeHm1AdaptiveP1.dat};
\addplot table [x=nE,y=condP] {TildeHm1AdaptiveP1.dat};
\end{loglogaxis}
\end{tikzpicture}
  \end{center}
  \caption{Condition numbers for $n=2$ and $p=0$ (upper panel) resp. $p=1$ (lower panel).
  The left column corresponds to uniform refinements and the right one to local refinements. The results shown
  correspond to the decompositions from~\cref{sec:decomp:tildeHm1,sec:decomp:tildeHm1:Pp}.}
  \label{fig:2d:tilde}
\end{figure}

\begin{figure}
  \begin{center}
    \begin{tikzpicture}
\begin{loglogaxis}[
width=0.49\textwidth,
cycle list/Dark2-6,
cycle multiindex* list={
mark list*\nextlist
Dark2-6\nextlist},
every axis plot/.append style={ultra thick},
xlabel={number of elements $\#\TT$},
grid=major,
legend entries={\small $\kappa(\mathbf{C}^{-1}\GALmat)$,\small $\kappa(\PRECmat^{-1}\GALmat)$},
legend pos=north west,
]
\addplot table [x=nE,y=condDiag] {Hm1Uniform3D.dat};
\addplot table [x=nE,y=condP] {Hm1Uniform3D.dat};
\end{loglogaxis}
\end{tikzpicture}
\begin{tikzpicture}
\begin{loglogaxis}[
width=0.49\textwidth,
cycle list/Dark2-6,
cycle multiindex* list={
mark list*\nextlist
Dark2-6\nextlist},
every axis plot/.append style={ultra thick},
xlabel={number of elements $\#\TT$},
grid=major,
legend entries={\small $\kappa(\mathbf{C}^{-1}\widetilde\GALmat)$,\small $\kappa(\widetilde\PRECmat^{-1}\widetilde\GALmat)$},
legend pos=north west,
]
\addplot table [x=nE,y=condDiag] {TildeHm1Uniform3D.dat};
\addplot table [x=nE,y=condP] {TildeHm1Uniform3D.dat};
\end{loglogaxis}
\end{tikzpicture}
\begin{tikzpicture}
\begin{loglogaxis}[
width=0.49\textwidth,
cycle list/Dark2-6,
cycle multiindex* list={
mark list*\nextlist
Dark2-6\nextlist},
every axis plot/.append style={ultra thick},
xlabel={number of elements $\#\TT$},
grid=major,
legend entries={\small $\kappa(\mathbf{C}_1^{-1}\GALmat)$,\small $\kappa(\PRECmat_1^{-1}\GALmat)$},
legend pos=north west,
]
\addplot table [x=nE,y=condDiag] {Hm1UniformP13D.dat};
\addplot table [x=nE,y=condP] {Hm1UniformP13D.dat};
\end{loglogaxis}
\end{tikzpicture}
\begin{tikzpicture}
\begin{loglogaxis}[
width=0.49\textwidth,
cycle list/Dark2-6,
cycle multiindex* list={
mark list*\nextlist
Dark2-6\nextlist},
every axis plot/.append style={ultra thick},
xlabel={number of elements $\#\TT$},
grid=major,
legend entries={\small $\kappa(\mathbf{C}_1^{-1}\widetilde\GALmat)$,\small $\kappa(\widetilde\PRECmat_1^{-1}\widetilde\GALmat)$},
legend pos=north west,
]
\addplot table [x=nE,y=condDiag] {TildeHm1UniformP13D.dat};
\addplot table [x=nE,y=condP] {TildeHm1UniformP13D.dat};
\end{loglogaxis}
\end{tikzpicture}
  \end{center}
  \caption{Condition numbers for $n=3$ and uniform refinement. 
  The upper panel corresponds to $p=0$, the lower panel to $p=1$.}
  \label{fig:3d}
\end{figure}

\begin{figure}
  \begin{center}
    \begin{tikzpicture}
\begin{loglogaxis}[
width=0.49\textwidth,
cycle list/Dark2-6,
cycle multiindex* list={
mark list*\nextlist
Dark2-6\nextlist},
every axis plot/.append style={ultra thick},
xlabel={number of elements $\#\TT$},
grid=major,
legend entries={\small $\kappa(\mathbf{C}^{-1}\GALmat)$,\small $\kappa(\PRECmat^{-1}\GALmat)$},
legend pos=north west,
]
\addplot table [x=nE,y=condDiag] {Hm1Uniform4D.dat};
\addplot table [x=nE,y=condP] {Hm1Uniform4D.dat};
\end{loglogaxis}
\end{tikzpicture}
\begin{tikzpicture}
\begin{loglogaxis}[
width=0.49\textwidth,
cycle list/Dark2-6,
cycle multiindex* list={
mark list*\nextlist
Dark2-6\nextlist},
every axis plot/.append style={ultra thick},
xlabel={number of elements $\#\TT$},
grid=major,
legend entries={\small $\kappa(\mathbf{C}^{-1}\widetilde\GALmat)$,\small $\kappa(\widetilde\PRECmat^{-1}\widetilde\GALmat)$},
legend pos=north west,
]
\addplot table [x=nE,y=condDiag] {TildeHm1Uniform4D.dat};
\addplot table [x=nE,y=condP] {TildeHm1Uniform4D.dat};
\end{loglogaxis}
\end{tikzpicture}
\begin{tikzpicture}
\begin{loglogaxis}[
width=0.49\textwidth,
cycle list/Dark2-6,
cycle multiindex* list={
mark list*\nextlist
Dark2-6\nextlist},
every axis plot/.append style={ultra thick},
xlabel={number of elements $\#\TT$},
grid=major,
legend entries={\small $\kappa(\mathbf{C}_1^{-1}\GALmat)$,\small $\kappa(\PRECmat_1^{-1}\GALmat)$},
legend pos=north west,
]
\addplot table [x=nE,y=condDiag] {Hm1UniformP14D.dat};
\addplot table [x=nE,y=condP] {Hm1UniformP14D.dat};
\end{loglogaxis}
\end{tikzpicture}
\begin{tikzpicture}
\begin{loglogaxis}[
width=0.49\textwidth,
cycle list/Dark2-6,
cycle multiindex* list={
mark list*\nextlist
Dark2-6\nextlist},
every axis plot/.append style={ultra thick},
xlabel={number of elements $\#\TT$},
grid=major,
legend entries={\small $\kappa(\mathbf{C}_1^{-1}\widetilde\GALmat)$,\small $\kappa(\widetilde\PRECmat_1^{-1}\widetilde\GALmat)$},
legend pos=north west,
]
\addplot table [x=nE,y=condDiag] {TildeHm1UniformP14D.dat};
\addplot table [x=nE,y=condP] {TildeHm1UniformP14D.dat};
\end{loglogaxis}
\end{tikzpicture}
  \end{center}
  \caption{Condition numbers for $n=4$ and uniform refinement. 
  The upper panel corresponds to $p=0$, the lower panel to $p=1$.}
  \label{fig:4d}
\end{figure}

We consider the L-shaped domain $\Omega := (-1,1)^2\setminus (-1,0]^2$ with an initial triangulation of $12$ elements
of the same area.
Our refinement method is the newest vertex bisection (NVB). Uniform refinement means that we bisect each triangle twice
such that each father triangle is divided into four son elements.
To obtain some ``realistic'' locally refined meshes, we define $w(r,\varphi) := r^{2/3}\cos(2/3\varphi-\pi/6)$
with polar coordinates $(r,\varphi)$ centered at the origin.
This function has a singular behavior at the reentrant
corner of the domain $\Omega$ and corresponds to singularities of the Laplacian.
We compute the error indicators
\begin{align*}
  \mu(T) := \norm{(1-Q)w}{1,T}^2,
\end{align*}
where $Q: L^2(\Omega) \to \PP^1(\TT)\cap H^1(\Omega)$ denotes the $L^2(\Omega)$ projection.
A set (of minimal cardinality) $\MM\subseteq\TT$ is determined using the bulk criterion
\begin{align*}
  \frac14 \sum_{T\in\TT} \mu(T) \leq \sum_{T\in\MM} \mu(T).
\end{align*}
Then, $\TT$ is refined based on the set of marked elements $\MM$ using NVB. Further details on NVB can be found, e.g.,
in~\cite{KPP13,stevenson:NVB}.

The diagonal matrix $\mathbf{C}$ is defined as
\begin{align*}
  \mathbf{C}_{jk} := \delta_{jk} |T_j|^2.
\end{align*}
This choice is (up to some logarithmic factors) equivalent to  $\norm{\chi_j}{-1}^2 \simeq \norm{\chi_j}{-1,\sim}^2$ for
sufficiently small $|T_j|$, see~\cite[Theorem~4.8]{amt99} for the scaling of basis functions in negative order Sobolev
norms. 
We skipped the logarithmic factors since using them did not improve condition numbers.
For $p=1$ we use the matrix
\begin{align*}
  \mathbf{C}_1^{-1} := \begin{pmatrix}
    \mathbf{C}^{-1} & 0 \\
    0 & \Dmat^{(1)}
  \end{pmatrix}
\end{align*}
as diagonal preconditioner.

For the implementation of the matrices $\widetilde\PRECmat^{-1}$, $\GALmat$ and $\widetilde\GALmat$ we have set
the parameters to $(\alpha,\beta) = (1/100,1/10)$. We found that with this choice our proposed preconditioners lead to
reasonably small condition numbers for different examples (not reported here).
Nevertheless, one might find other values $(\alpha,\beta)$ that even lead to smaller condition numbers.

\Cref{fig:2d,fig:2d:tilde} show the condition numbers for uniform (left) and local refinements (right) for $p=0$ (upper panel) and $p=1$
(lower panel), respectively.
The diagonal preconditioner in the case of local refinements delivers --- as expected, see, e.g.,~\cite{amt99}, ---
condition numbers comparable to the case of uniform refinement. (More precisely, it is
shown in~\cite{amt99} that the condition number of diagonally preconditioned systems like those considered here only
depend on the number of elements up to some possible logarithmic factors.)
In all configurations our proposed preconditioners lead to quite small condition numbers, even on locally refined meshes, which confirms our
theoretical results.

\subsection{Condition numbers for $n=3$}

In this section we consider a similar problem in 3D where $\Omega = \{(-1,1)^2\setminus(-1,0]\}\times(0,1)$ is an L-shaped
domain. We start with a triangulation of 24 tetrahedrons.
The diagonal preconditioner matrix is now defined as
\begin{align*}
  \mathbf{C}_{jk} := \delta_{jk} |T_j|^{5/3}.
\end{align*}
The meshes are refined using the red refinement rule. 
(Now a uniform refinement corresponds to the division of one tetrahedra into eight tetrahedrons.)

From~\cref{fig:3d} we observe a similar behavior as in the case $n=2$. 
In particular, we see that our proposed preconditioners lead to quite small condition numbers (the parameters
$\alpha,\beta$ are chosen as in the case $n=2$).

\subsection{Condition numbers for $n=4$}
We consider the unit 4-cube $\Omega = (0,1)^4$ which is divided into 24 simplices (Kuhn's triangulation,
see~\cite{Bey00}). The diagonal preconditioner matrix is now defined with entries
\begin{align*}
  \mathbf{C}_{jk} := \delta_{jk} |T_j|^{3/2}.
\end{align*}
We use Freudenthal's algorithm (see also~\cite{Bey00})
to obtain uniform refined regular meshes. (Each simplex is decomposed into 16 subsimplices.)
We choose $\alpha=\tfrac1{10}=\beta$.
Results are shown in~\cref{fig:4d} for $p=0$ and $p=1$. Again, they appear to confirm our prediction
of bounded condition numbers.

\appendix
\section{Proof of~\cref{lem:localreg}}\label{app:localreg}
We follow exactly the same ideas and lines of proof as in~\cite[Appendix~A]{AinsworthGuzmanSayas} adapted to our
situation (with volume force but homogeneous boundary conditions) and notation. 

Throughout fix $T\in\TT$ and let $\cutoff\in C^\infty(\patch(T))$ denote a cut-off function with the properties
\begin{subequations}
\begin{align}
  \cutoff|_T &= 1,  \qquad
  \cutoff|_{\Omega\setminus\patch(T)} = 0, \\
  \norm{D^m \cutoff}{L^\infty(\patch(T))} & \lesssim h_T^{-m}, \quad\text{for } m=0,1,2.
\end{align}
\end{subequations}
Let $u\in H_0^1(\Omega)$ denote the solution of~\cref{eq:dirichlet} with datum $f\in L^2(\Omega)$.
Let $s = s(\Omega)\in(1/2,1]$ denote the regularity shift. Then, by~\cref{eq:regularity} we have that
\begin{align}\label{eq:app:reg}
  \norm{u}{1+s,T} \leq \norm{\cutoff u}{1+s,\patch(T)} = \norm{\cutoff u}{1+s} 
  \lesssim \norm{\Delta(\cutoff u)}{-1+s},
\end{align}
since $\cutoff u|_\Gamma = 0$ and $\Delta(\cutoff u)\in L^2(\Omega)$.

We consider the case where $|\partial \patch(T)\cap \Gamma| = 0$. 
Then, $\nabla(\cutoff u)\cdot\normal = 0$ on $\Gamma$. Let $v\in H^{1-s}(\Omega)$.
Using $v_{\patch(T)} := |\patch(T)|^{-1} \int_{\patch(T)} v \,dx$ and the product rule
\begin{align}\label{eq:app:prodrule}
  \Delta(\cutoff u) = u\Delta \cutoff + 2\nabla \cutoff\cdot\nabla u -\cutoff f,
\end{align}
we infer that
\begin{align*}
  \ip{\Delta(\cutoff u)}{v} &= \ip{\Delta(\cutoff u)}{v-v_{\patch(T)}} 
  = \ip{\Delta(\cutoff u)}{v-v_{\patch(T)}}_{\patch(T)} \\
  &= \ip{u\Delta \cutoff + 2\nabla\cutoff\cdot\nabla u - \cutoff f}{v-v_{\patch(T)}}_{\patch(T)}.
\end{align*}
Note that $\norm{v-v_{\patch(T)}}{\patch(T)} \lesssim h_T^{1-s}\snorm{v}{1-s,\patch(T)}$ and therefore,
\begin{align*}
  |\ip{\Delta(\cutoff u)}{v}| &\lesssim h_T^{1-s} (\norm{\Delta\cutoff}{L^\infty(\patch(T))}\norm{u}{\patch(T)} 
    + \norm{\nabla \cutoff}{L^\infty(\patch(T))}\norm{\nabla u}{\patch(T)} 
  + \norm{\cutoff}{L^\infty(\patch(T))} \norm{f}{\patch(T)}) \norm{v}{1-s} \\
  &\lesssim (h_T^{-1-s}\norm{u}{\patch(T)} + h_T^{-s}\norm{\nabla u}{\patch(T)} + h_T^{1-s}\norm{f}{\patch(T)})\norm{v}{1-s}.
\end{align*}
Recall that we consider the case where $\partial \patch(T)$ does not share a boundary facet.
So the same estimates hold true when we replace $u$ by $u=u-u_{\patch(T)}$ since $\cutoff (u-u_{\patch(T)})|_{\Gamma} =
0$ and $\Delta(\cutoff(u-u_{\patch(T)}))\in L^2(\Omega)$.
Using $\norm{u-u_{\patch(T)}}{\patch(T)}\lesssim h_T \norm{\nabla u}{\patch(T)}$, dividing by $\norm{v}{1-s}$,
and taking the supremum we get
\begin{align*}
  h_T^s\snorm{\nabla u}{s,T} \leq h_T^s\norm{u-u_{\patch(T)}}{1+s,T} \lesssim 
  h_T^s\norm{\Delta(\cutoff (u-u_{\patch(T)}))}{-1+s}
  \lesssim \norm{\nabla u}{\patch(T)} + h_T\norm{f}{\patch(T)}.
\end{align*}
Now we tackle the case where $\partial\patch(T)$ includes at least one boundary facet $E\in\EE^\Gamma$.
First, note that if a function $w$ vanishes on one facet $E$, then
\begin{align*}
  \norm{w}{\patch(T)} \lesssim h_T^r \snorm{w}{r,\patch(T)}, \quad 0\leq r\leq 1.
\end{align*}
Second, recall that
\begin{align*}
  \norm{\phi}{-1+s} = \sup_{0\neq v\in C_0^\infty(\Omega)} \frac{\ip{\phi}v}{\norm{v}{1-s}}.
\end{align*}
Then, the product rule~\cref{eq:app:prodrule} and the properties of the cut-off function prove that
\begin{align*}
  |\ip{\Delta(\cutoff u)}v| &\lesssim (h_T^{-2}\norm{u}{\patch(T)} + h_T^{-1}\norm{\nabla u}{\patch(T)} +
  \norm{f}{\patch(T)})\norm{v}{\patch(T)} \quad \text{for all } v \in C_0^\infty(\Omega).
\end{align*}
Using $\norm{v}{\patch(T)}\lesssim h_T^{1-s}\snorm{v}{1-s,\patch(T)}$, $\norm{u}{\patch(T)}\lesssim
h_T\norm{\nabla u}{\patch(T)}$ we further infer that
\begin{align*}
  |\ip{\Delta(\cutoff u)}v| &\lesssim h_T^{-s} (\norm{\nabla u}{\patch(T)} + h_T\norm{f}{\patch(T)})\norm{v}{1-s}.
\end{align*}
Dividing by $\norm{v}{1-s}$ and taking the supremum, we conclude that
\begin{align*}
  h_T^s\snorm{\nabla u}{s,T} \leq h_T^s\norm{u}{1+s,T} \lesssim h_T^s \norm{\Delta(\cutoff u)}{-1+s} 
  \lesssim \norm{\nabla u}{\patch(T)} + h_T \norm{f}{\patch(T)}.
\end{align*}
This finishes the proof of~\cref{lem:localreg}. 
\qed

\bibliographystyle{abbrv}
\bibliography{literature}

\end{document}